 \numberwithin{equation}{section}
\theoremstyle{plain}
\newtheorem{thm}{Theorem}[section]
\newtheorem{cor}[thm]{Corollary}
\newtheorem{lem}[thm]{Lemma}
\newtheorem{prop}[thm]{Proposition}
\theoremstyle{definition}
\newtheorem{defn}[thm]{Definition}
\theoremstyle{remark}
\newtheorem{rem}[thm]{Remark}
\newcommand{\N}{\mathbb{N}}
\newcommand{\R}{\mathbb{R}}
\newcommand{\I}{\infty}
\newcommand{\ffint}{\iint_{Q_r} \!\!\!\!\!\!\!\!\!\!\!\!\!\!\text{-----}}
\newcommand{\fffint}{\iint_{Q_1} \!\!\!\!\!\!\!\!\!\!\!\!\!\!\text{-----}}
\newcommand{\fthetaint}{\iint_{Q_{\theta_0}} \!\!\!\!\!\!\!\!\!\!\!\!\!\!\text{-----}}
\newcommand{\gfint}{\int_{B_r} \!\!\!\!\!\!\!\!\! -}
\newcommand{\bp}{\begin{proof}[\ensuremath{\mathbf{Proof}}]}
\newcommand{\bs}{\begin{proof}[\ensuremath{\mathbf{Solution}}]}
\newcommand{\ep}{\end{proof}}
\begin{document}


\title{Partial regularity of weak solutions of the viscoelastic Navier-Stokes equations with damping }

\author{Ryan Hynd\thanks{This material is based upon work supported by the National Science Foundation under Grant No. DMS-1004733.}\\
Courant Institute of Mathematical Sciences\\
New York University\\
251 Mercer Street\\
New York, NY 10012-1185 USA}  

\maketitle

\begin{abstract}
We prove an analog of the Caffarelli-Kohn-Nirenberg theorem for weak solutions of a system of PDE that model a viscoelastic fluid in the presence of an energy damping mechanism.  
The system was recently introduced as a possible method of establishing the global in time existence of weak solutions of the well known Oldroyd system.
\end{abstract}

\tableofcontents




\section{Introduction}
The Oldroyd model for an incompressible, viscoelastic fluid is governed by the following system of equations
\begin{equation}\label{oldSystem}
\begin{cases}
\hspace{.06in}\partial_tu + (u\cdot \nabla)u  = \Delta u - \nabla p +\nabla\cdot FF^t\\
\partial_t F +(u\cdot \nabla)F  =\nabla u F\\
\hspace{.73in}\nabla \cdot u = 0
\end{cases}.
\end{equation}
Informally, we refer to \eqref{oldSystem} as the {\it viscoelastic Navier-Stokes equations}.  This system of PDE is always assumed to be satisfied in an open subset of spacetime $\R^3\times \R.$ 
At a point $(x,t)$ in spacetime,  $u=u(x,t)\in \R^3$ represents the fluid's velocity, $ p=p(x,t)\in \R$ represents the fluid's pressure, 
and $F=F(x,t)\in \R^{3\times 3}$ represents the local deformation of the fluid.  The associated energy law for any smooth solution $(u,p,F)$ on $\R^3\times (0,\infty)$ that vanishes
rapidly enough as $|x|\rightarrow \infty$ is
$$ 
\frac{d}{dt}\int_{\R^3}\left\{\frac{|u(x,t)|^2}{2}+\frac{|F(x,t)|^2}{2}\right\}dx =- \int_{\R^3}|\nabla u(x,t)|^2dx
$$
for $t>0.$

\par Solutions of initial value problems associated to \eqref{oldSystem} have been studied extensively.  For instance, the short time existence of a smooth solution and the global existence of a smooth 
solution that is initially small (in an appropriate norm) has been established in various settings \cite{LZ, LLZ}.  However, it is not known if solutions with smooth initial and boundary data develop 
singularities or even if some meaningful type of weak solutions exist globally in time.  

\par In pursuing the former problem, the authors of \cite{LLZ} introduced the following system as a way of approximating solutions of
\eqref{oldSystem}
\begin{equation}\label{mainSystem}
\begin{cases}
\hspace{.06in}\partial_tu + (u\cdot \nabla)u  = \Delta u - \nabla p +\nabla\cdot FF^t\\
\partial_t F +(u\cdot \nabla)F  = \mu \Delta F+\nabla u F \\
\hspace{.73in}\nabla \cdot u = 0
\end{cases}
\end{equation}
for a parameter $\mu>0.$ The associated energy law for any smooth solution $(u,p,F)$ of \eqref{mainSystem} on $\R^3\times (0,\infty)$, that vanishes
rapidly enough as $|x|\rightarrow \infty$, is
$$
\frac{d}{dt}\int_{\R^3}\left\{\frac{|u(x,t)|^2}{2}+\frac{|F(x,t)|^2}{2}\right\}dx =- \int_{\R^3}\left(\mu |\nabla F(x,t)|^2+|\nabla u(x,t)|^2\right)dx
$$
for $t>0.$  Therefore, the presence of $\mu $ acts to create energy dissipation, and so we interpret $\mu $ as a {\it damping parameter}  
and call \eqref{mainSystem} the {\it viscoelastic Navier-Stokes equations with damping.}

\par It is not difficult to establish the existence of a global in time weak solution of \eqref{mainSystem} analogous to that of the Leray-Hopf solutions of the incompressible Navier-Stokes 
equations.  Unfortunately, standard weak convergence methods do not allow one to pass to the limit as $\mu\rightarrow 0^+$ to generate weak solutions of \eqref{oldSystem}; see the end of section 2 in reference \cite{LLZ} for more
this.  Nevertheless, the system \eqref{mainSystem} is itself of interest and is the topic of study in our work. 

\par A fundamental simplification that we will make in our analysis is that we consider solutions $(u,p,F)$ of \eqref{mainSystem} that additionally satisfy the equation
\begin{equation}\label{divFtranspose}
\nabla\cdot F^t=0.
\end{equation}
That is, a standing assumption that we shall make is that the deformation $F$ has divergence free columns.  This assumption is motivated from taking the divergence of the second equation in \eqref{mainSystem} which yields
the following  transport equation
$$
\partial_t (\nabla\cdot F^t)+ (u\cdot \nabla)(\nabla\cdot F^t) = \mu \Delta (\nabla\cdot F^t).
$$
The above PDE formally implies that if \eqref{divFtranspose} holds at some instance of time, then it will hold at all later times.  Therefore, we believe that our results below will be pertain to solutions of \eqref{mainSystem} that initially satisfy \eqref{divFtranspose}.

\par As our results do not change qualitatively as $\mu>0$ is varied, we set 
$$
\mu =1
$$
in our analysis of weak solutions of \eqref{mainSystem}.  Our main result is the analog for \eqref{mainSystem} of the Caffarelli-Kohn-Nirenberg
theorem for the incompressible Navier-Stokes equations \cite{CKN}.  The statement of this theorem involves the concept of a weak solution, which we will define in the 
next section, and the concept of the singular set of a solution $(u,p,F)$.  The {\it singular set} corresponding to a solution $(u,p,F)$ is defined as the set of points $(x,t)$ in the domain of $(u,p,F)$
for which either $u$ of $F$ is not H\"{o}lder continuous in any neighborhood of $(x,t).$ Any point not belonging to the singular set of $(u,p,F)$ is a {\it regular point}.

\begin{thm}\label{mainthm}
There is a universal constant $\epsilon>0$ such that if $(u,p,F)$ is a  weak solution of \eqref{mainSystem} and if
\begin{equation}\label{CKNcond}
\limsup_{r\rightarrow 0^+}\frac{1}{r}\iint_{Q_r(x,t)}\left\{|\nabla u|^2 + |\nabla F|^2\right\}dyds<\epsilon,
\end{equation}
then $u$ and $F$ are H\"{o}lder continuous on some neighborhood of $(x,t)$. In particular, $(x,t)\notin S.$ 
\end{thm}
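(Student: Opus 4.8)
The plan is to follow the Caffarelli--Kohn--Nirenberg strategy, adapted to the coupled system, and to exploit the fact that the $F$-equation is a genuine (advection-diffusion) parabolic equation rather than a transport equation. The overall scheme has two stages: first, an \emph{$\epsilon$-regularity lemma} establishing that smallness of the scaled energy $r^{-1}\iint_{Q_r}\{|\nabla u|^2+|\nabla F|^2\}$ at one scale forces a quantitative decay of a suitable dimensionless excess quantity at a smaller scale; second, an iteration of that lemma down the scales, yielding a Morrey-type estimate which, combined with parabolic regularity theory, gives H\"older continuity of $u$ and $F$ near $(x,t)$.

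\textbf{Step 1 (Local energy/pressure estimates and dimensional analysis).} Using the weak-solution formulation (in particular the local energy inequality, which for this system will control $\sup_t \int B_r |u|^2+|F|^2$ and $\iint_{Q_r}|\nabla u|^2+|\nabla F|^2$ in terms of lower-order quantities on a larger cylinder) I would record the natural scale-invariant quantities: $A(r)=\sup_{t}r^{-1}\int_{B_r}(|u|^2+|F|^2)$, $\delta(r)=r^{-1}\iint_{Q_r}(|\nabla u|^2+|\nabla F|^2)$, $G(r)=r^{-2}\iint_{Q_r}(|u|^3+|F|^3)$, and a pressure term $K(r)=r^{-2}\iint_{Q_r}|p|^{3/2}$. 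These are the exact analogues of the CKN quantities and are invariant under the parabolic scaling $u_\lambda(x,t)=\lambda u(\lambda x,\lambda^2 t)$, $F_\lambda=\lambda F(\lambda x,\lambda^2 t)$, $p_\lambda=\lambda^2 p(\lambda x,\lambda^2 t)$ which preserves \eqref{mainSystem} and \eqref{divFtranspose}. By Sobolev--Poincar\'e interpolation (Ladyzhenskaya-type inequalities on cylinders), $G(\theta r)$ is bounded by a combination of $A(r)$, $\delta(r)$ and powers of $\theta$; and from the equation for $p$ — note $-\Delta p=\partial_i\partial_j(u_iu_j) - \partial_i\partial_j(F_{ik}F_{jk})$, the extra term being the only novelty — one gets a Calder\'on--Zygmund estimate bounding $K(\theta r)$ by $\theta K(r)$ plus lower order terms in $A,\delta,G$.

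\textbf{Step 2 (Decay of the excess and iteration).} Combining the local energy inequality with the interpolation and pressure estimates of Step 1, I would show: there exist $\epsilon_0>0$, $\theta\in(0,1/2)$ and $C$ such that if $\delta(r)+$ (a small multiple of the other scaled quantities)$<\epsilon_0$, then $E(\theta r)\le \tfrac12 E(r)+C\,\theta\,(\cdots)$, where $E(r):=A(r)+\delta(r)+G(r)+K(r)$. Iterating, $E(\theta^k r)\to 0$ geometrically, which yields $\iint_{Q_\rho(x,t)}(|\nabla u|^2+|\nabla F|^2)\le C\rho^{1+\alpha}$ for some $\alpha>0$ and all small $\rho$ — i.e., $\nabla u,\nabla F\in \mathrm{Morrey}$ near $(x,t)$, hence $u,F\in L^\infty_{\mathrm{loc}}$ there by Sobolev embedding on cylinders. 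The key difference from pure Navier--Stokes is that one must track the $F$-quantities through every inequality; since $F$ satisfies a parabolic equation of exactly the same structure as the $u$-equation (with the extra good term $-\Delta F$ and the forcing $\nabla u\,F$, which is quadratic and absorbed the same way as $(u\cdot\nabla)u$), the estimates go through in parallel, but one must be careful that the coupling terms $\nabla\cdot(FF^t)$ and $\nabla u\,F$ are handled symmetrically so no term is left uncontrolled.

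\textbf{Step 3 (Bootstrap to H\"older continuity).} Once $u,F\in L^\infty$ near $(x,t)$, I would freeze coefficients and treat both equations as linear parabolic systems with bounded drift and right-hand sides in improving spaces: $u$ solves $\partial_t u-\Delta u=-\nabla p-(u\cdot\nabla)u+\nabla\cdot(FF^t)$ and $F$ solves $\partial_t F-\Delta F=-(u\cdot\nabla)F+\nabla u\,F$; $p$ is recovered from the elliptic equation above and lies in the appropriate $L^q_{\mathrm{loc}}$. Parabolic $L^q$ (or Campanato/Schauder) theory then upgrades $u,F$ to $C^\alpha$, and a standard bootstrap on the now-H\"older coefficients gives interior regularity, so $(x,t)\notin S$.

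The main obstacle I anticipate is \textbf{Step 2}: making the excess-decay estimate close honestly, in particular getting the pressure term $K$ into the iteration with the right power of $\theta$ while the forcing $\nabla\cdot(FF^t)$ appears in $u$'s equation. One must choose the smallness threshold $\epsilon$ so that all the nonlinear and coupling terms are genuinely lower order against the geometric gain from $\theta$, and one must verify the local energy inequality for \eqref{mainSystem} is strong enough to control $A(r)$ by $G$, $K$, and the $F$-analogue on a larger cylinder — this is where the assumption \eqref{divFtranspose} is likely needed, to kill boundary/divergence terms when integrating $\nabla\cdot(FF^t)$ against test functions and to make the $F$-equation's energy identity clean. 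Everything else is a careful but routine transcription of the CKN machinery with $F$ carried alongside $u$.
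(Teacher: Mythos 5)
Your architecture (scale-invariant quantities $A,\delta,G,K$, a direct excess-decay iteration, then a bootstrap) is the ``direct'' Caffarelli--Kohn--Nirenberg route. The paper instead follows Lin and Ladyzhenskaya--Seregin: it first proves, by a compactness/blow-up contradiction argument, a decay lemma for the \emph{oscillation} excess
$E(x,t,r)$ built from $\left(|Q_r|^{-1}\iint_{Q_r}|u-u_{Q_r}|^3\right)^{1/3}$, the analogous quantity for $F$, and $r\left(|Q_r|^{-1}\iint_{Q_r}|p-p_{B_r}|^{3/2}\right)^{2/3}$; iterating that lemma gives H\"older continuity whenever $r\overline{E}(x,t,r)$ is small at one admissible scale; and only then are the $A$-$B$-$C$-$D$ inequalities (your Step 1) used to show that hypothesis \eqref{CKNcond} forces that smallness. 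Both routes are viable in principle, but as written your proposal has two genuine gaps.

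First, the passage from decay to regularity is wrong as stated. A Morrey bound $\iint_{Q_\rho}(|\nabla u|^2+|\nabla F|^2)\le C\rho^{1+\alpha}$, i.e.\ $\delta(\rho)\le C\rho^{\alpha}$, does \emph{not} give $u,F\in L^\infty_{\mathrm{loc}}$ ``by Sobolev embedding on cylinders'': the parabolic homogeneous dimension is $5$, so boundedness or H\"older continuity via Morrey--Campanato would require decay of order $\rho^{3+2\alpha}$ of that integral, or geometric decay of a mean-oscillation quantity, neither of which your iteration produces. This is precisely why the paper's decay lemma is formulated for the oscillation excess $E(x,t,r)$ (whose geometric decay, uniformly over nearby base points, yields H\"older continuity by the Campanato characterization), and why even in CKN the first proposition only yields $|u|\le C/r$, with H\"older continuity requiring further work. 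Step 2's conclusion must be replaced by one of these mechanisms.

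Second, hypothesis \eqref{CKNcond} controls only $\delta=B$, while your iteration needs smallness of $A$, $G$ and $K$ to start; it is not automatic that smallness of $B$ at all small scales forces the other quantities down. Closing this loop is the entire content of the paper's Section \ref{ABCDest}: one iterates the combination $\mathcal{E}(r)=A^{3/2}(r)+D^2(r)$, exploiting the specific exponents in Lemmas \ref{FinalCEst}--\ref{FinalDest} so that the $\theta^2$ gain beats the universal constants and all $B$-terms are absorbed using $\limsup_{r\to 0^+}B(r)<\epsilon$. You flag this as the ``main obstacle'' but supply no mechanism, so the argument does not close as written.
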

In the limit above \eqref{CKNcond}, and in this work,  
$$
Q_r(x,t):=B_r(x)\times (t-r^2/2,t+r^2/2)\subset \R^3\times\R
$$
is a parabolic cylinder of radius $r$ centered at $(x,t)$.  Using standard covering arguments (see in particular section 6 of \cite{CKN}), we have the following corollary. 
\begin{cor}
Assume that $(u,p,F)$ be a suitable weak solution of \eqref{mainSystem} on an open subset of $\R^3\times \R$ and let $S$ be the singular set of $u$ and $F$. Then ${\cal P}^1(S)=0$, where ${\cal P}^1$ denotes one-dimensional parabolic Hausdorff measure on $\R^3\times\R.$
\end{cor}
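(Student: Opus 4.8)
The plan is to derive the corollary from Theorem~\ref{mainthm} by a standard covering (``packing'') argument, in the spirit of section~6 of \cite{CKN}. Set $g:=|\nabla u|^2+|\nabla F|^2$; this lies in $L^1_{\mathrm{loc}}$ on the domain of $(u,p,F)$ by the definition of a weak solution. The contrapositive of Theorem~\ref{mainthm} says that every point $(x,t)$ of the singular set $S$ satisfies
\[
\limsup_{r\rightarrow 0^+}\frac{1}{r}\iint_{Q_r(x,t)}g\,dy\,ds\ \geq\ \epsilon ,
\]
so that for each such $(x,t)$ there are arbitrarily small $r>0$ with $r\leq \tfrac{1}{\epsilon}\iint_{Q_r(x,t)}g\,dy\,ds$. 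It therefore suffices to prove that the set $E\supset S$ of points at which the above $\limsup$ is at least $\epsilon$ is null for one-dimensional parabolic Hausdorff measure.

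First I would observe that $\mathcal{L}^4(E)=0$. Since $|Q_r(x,t)|$ is comparable to $r^5$, at every Lebesgue point of $g$ one has $\frac{1}{r}\iint_{Q_r(x,t)}g\,dy\,ds \sim r^4\,\frac{1}{|Q_r(x,t)|}\iint_{Q_r(x,t)}g\,dy\,ds\to 0$ as $r\to 0^+$; by the Lebesgue differentiation theorem (valid for the parabolic cylinders, which form a regular differentiation basis) almost every point has this $\limsup$ equal to $0$, so $\mathcal{L}^4(E)=0$ and in particular $\mathcal{L}^4(S)=0$. Consequently, by absolute continuity of the integral, for every $\delta>0$ there is an open set $O$ contained in the domain of $(u,p,F)$ with $S\subset E\subset O$ and $\iint_O g\,dy\,ds<\delta$.

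Next I would run a Vitali-type $5r$-covering argument adapted to parabolic cylinders. Fixing $\eta>0$, for each $(x,t)\in S$ I pick $r=r(x,t)\in(0,\eta)$ small enough that $Q_r(x,t)\subset O$ and $r\leq\tfrac1\epsilon\iint_{Q_r(x,t)}g\,dy\,ds$, which is possible by the two facts above. The cylinders $\{Q_{r(x,t)}(x,t)\}_{(x,t)\in S}$ cover $S$, and because the parabolic cylinders are comparable to the balls of the quasi-metric $(y,s)\mapsto\max\{|y-x|,|s-t|^{1/2}\}$ and hence support a Vitali covering lemma, I can extract a countable pairwise disjoint subfamily $\{Q_{r_i}(x_i,t_i)\}$ with $S\subset\bigcup_i Q_{5r_i}(x_i,t_i)$. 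Then, using disjointness and $Q_{r_i}(x_i,t_i)\subset O$,
\[
\sum_i 5r_i\ \leq\ \frac{5}{\epsilon}\sum_i\iint_{Q_{r_i}(x_i,t_i)}g\,dy\,ds\ \leq\ \frac{5}{\epsilon}\iint_O g\,dy\,ds\ <\ \frac{5\delta}{\epsilon}.
\]
Since every $5r_i<5\eta$, this bounds the parabolic Hausdorff premeasure of $S$ at scale $5\eta$ by $5\delta/\epsilon$; letting $\delta\to0$ (with $\eta$ arbitrary) yields ${\cal P}^1(S)=0$.

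The substantive content is entirely in Theorem~\ref{mainthm}; the corollary is essentially bookkeeping, and I do not expect a genuine obstacle. The only points needing a line or two of justification are that the parabolic cylinders $Q_r$ support the Vitali covering lemma and the Lebesgue differentiation theorem (standard, since they behave like balls of a doubling quasi-metric space) and that weak solutions satisfy $\nabla u,\nabla F\in L^2_{\mathrm{loc}}$, hence $g\in L^1_{\mathrm{loc}}$ — which is part of the notion of weak solution invoked in Theorem~\ref{mainthm}.
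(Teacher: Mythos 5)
Your argument is correct and is precisely the ``standard covering argument'' (Vitali covering of $S$ by parabolic cylinders on which $\frac1r\iint_{Q_r}(|\nabla u|^2+|\nabla F|^2)\geq\epsilon$, combined with absolute continuity of the integral) that the paper invokes by reference to section~6 of \cite{CKN} without writing it out. No discrepancy with the paper's approach.
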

In proving Proposition \ref{mainthm}, we employ the blow-up/compactness method introduced by Lin \cite{L} that lead to a relatively simple proof of the Caffarelli-Kohn-Nirenberg theorem.   We also borrow many of the 
ideas from the very clear account on these topics given by Ladyzhenskaya and Seregin \cite{LS}.  The organization of this paper is as follows. In section \ref{WS}, we define suitable weak solutions and establish an important compactness result for these solutions;  a fundamental corollary of this compactness is a ``decay" or ``blow-up" lemma. In section \ref{HC}, we use the decay lemma to deduce a local condition that if satisfied implies solutions are locally H\"{o}lder continuous.  Finally, in section \ref{ABCDest}, we show that \eqref{CKNcond} implies that our local regularity condition is satisfied, which in turn furnishes a proof Theorem \ref{mainthm}.

\section{Weak solutions}\label{WS}
As mentioned above, we set $\mu=1$ in equation \eqref{mainSystem} and study the system of PDE 
\begin{equation}\label{System1}
\begin{cases}
\hspace{.06in}\partial_tu + (u\cdot \nabla)u  = \Delta u - \nabla p +\nabla\cdot FF^t\\
\partial_t F +(u\cdot \nabla)F  = \Delta F+\nabla u F \\
\hspace{.73in}\nabla \cdot u = 0
\end{cases}.
\end{equation}
Sometimes it will be beneficial for us to write equation \eqref{System1}  in terms of the columns of the matrix valued mapping $F$. Setting $F_j:=Fe_j$  for $j=1,2,3$, we have
from the assumption \eqref{divFtranspose}
\begin{equation}
\nabla\cdot FF^t=\sum^3_{k=1}(F_k\cdot \nabla)F_k. \nonumber
\end{equation}
In particular, \eqref{System1} and  \eqref{divFtranspose} can be rewritten together as

\begin{equation}\label{System1Column}
\begin{cases}
\hspace{.15in}\partial_tu + (u\cdot \nabla)u  = \Delta u - \nabla p +\sum^3_{k=1}(F_k\cdot \nabla)F_k  \\
\partial_t F_j +(u\cdot \nabla)F_j  =\Delta F_j+(F_j\cdot\nabla) u ,\quad j=1,2,3\\
\hspace{.8in}\nabla \cdot u = \nabla \cdot F_j=0
\end{cases}.
\end{equation}
As Theorem \ref{mainthm} is local, we only consider solutions on the unit cylinder $Q_1:=Q_1(0,0).$   Before pursuing the analysis of solutions, let us make some basic observations that 
will motivate the definition of suitable weak solutions and other ideas to follow.  
\\
\\
{\bf Scale invariance}. If $(u,p,F)$ is a solution of \eqref{System1} on $Q_r(x_0,t_0)$, then $(u^\lambda, p^\lambda, F^\lambda)$ is a solution on 
$Q_{r/\lambda}(0,0)$ for $\lambda>0$, where
\begin{equation}\label{ScaleInv}
\begin{cases}
u^\lambda(x,t)=\lambda u(x_0+\lambda x,t_0+\lambda^2 t)\\
u^\lambda(x,t)=\lambda^2 p(x_0+\lambda x,t_0+\lambda^2 t)\\
F^\lambda(x,t)=\lambda F(x_0+\lambda x,t_0+\lambda^2 t)
\end{cases}.
\end{equation}
{\bf Local energy identity}. If $(u,p,F)$ is a smooth solution of \eqref{System1} on $Q_1$ and $\phi\in C^\I_c(Q_1)$, then
\begin{align*}
\frac{d}{dt}\int_{B_1}\phi\left(\frac{|u|^2}{2}+\frac{|F|^2}{2}\right)dx + \int_{B_1}\phi\left(|\nabla u|^2+|\nabla F|^2\right)dx &= \int_{B_1}\left\{(\phi_t+\Delta \phi)\left(\frac{|u|^2}{2}+\frac{|F|^2}{2}\right) \right.\\
&\left. -u\otimes\nabla\phi\cdot FF^t + \left(\frac{|u|^2}{2}+\frac{|F|^2}{2}+p\right)u\cdot \nabla \phi \right\}dx
\end{align*}
for $t\in (-1/2,1/2).$
\\
\\
{\bf A spacetime $L^{10/3}$ bound on $u,F$}. From the local energy identity above, we expect solutions $(u,p,F)$ to satisfy
\begin{equation}\label{SpaceTimeIntegral}
\sup_{-1/2\le t\le 1/2}\int_{B_1}|u|^2 +|F|^2dx + \iint_{Q_1}|\nabla u|^2 + |\nabla F|^2dxdt\le C.
\end{equation}
An application of the interpolation estimate 
\begin{equation}\label{InterpolationINEQ}
|v|_{L^r(\Omega)}\le C\left\{|v|_{L^2(\Omega)}^\alpha |\nabla v|_{L^2(\Omega)}^{1-\alpha} + \frac{|v|_{L^2(\Omega)}}{|\Omega|^{1/2-1/r}}\right\}
\end{equation}
for $v\in H^1(\Omega; \R^3)$, where
$$
\alpha = \frac{3}{r}-\frac{1}{2}, \quad 2\le r\le 6,
$$
provides the bound
\begin{equation}\label{uFtenthirds}
\iint_{Q_1}| u|^{10/3} + | F|^{10/3}dxdt\le C_1.
\end{equation}
Here $C_1$ only depends on the constant $C$ in \eqref{SpaceTimeIntegral}. 
\\
\\
{\bf A spacetime $L^{5/3}$ bound on $p$}.  Taking the divergence of equation \eqref{System1} gives 
\begin{equation}\label{pEq}
-\Delta p = \nabla\cdot \left[(u\cdot \nabla)u - \sum^{3}_{k=1}(F_k\cdot \nabla)F_k\right].
\end{equation}
Note that 
\begin{equation}\label{pbounds1}
|(u\cdot\nabla)u|_{L^{15/14}(B_1)}\le |\nabla u|_{L^2(B_1)} | u|_{L^{30/13}(B_1)}.
\end{equation}
By the interpolation inequality \eqref{InterpolationINEQ} with 
$r=30/13$ and $\alpha=4/5$
\begin{equation}\label{pbounds2}
| u|_{L^{30/13}(B_1)}\le C\left\{|\nabla u|_{L^2(B_1)}^{1/5}+1\right\}.
\end{equation}
With \eqref{pbounds1} and \eqref{pbounds2}, we have
$$
|(u\cdot\nabla)u|^{5/3}_{L^{15/14}(B_1)}\le C\left\{|\nabla u|_{L^2(B_1)}^{2}+1\right\}
$$
and likewise
$$
|(F_j\cdot\nabla)F_j|^{5/3}_{L^{15/14}(B_1)}\le C\left\{|\nabla F_j|_{L^2(B_1)}^{2}+1\right\}, \quad j=1,2,3.
$$
Since, 
$$
\left(\frac{15}{14}\right)^*=\frac{5}{3},
$$
we also have by the Sobolev inequality and basic PDE estimates following from equation \eqref{pEq}
\begin{align}\label{pfivethirdsEst}
\iint_{Q_\theta}|p|^{5/3}dxdt & \le C\int^{\theta^2/2}_{-\theta^2/2}|\nabla p|^{5/3}_{L^{15/14}(B_\theta)}dt \nonumber \\
&\le  C\int^{1/2}_{-1/2}\left(|(u\cdot\nabla)u|^{5/3}_{L^{15/14}(B_1)}+\sum^3_{j=1}|(F_j\cdot\nabla)F_j|^{5/3}_{L^{15/14}(B_1)}+1\right)dt \nonumber \\
&\le C\iint_{Q_1}\left(|\nabla u|^2 +\sum^3_{j=1}|\nabla F_j|^2+1\right)dxdt
\end{align}
for any $\theta\in (0,1)$.  Thus $p\in L^{5/3}_{\text{loc}}(Q_1).$ 
\\
\par The above observations motivate the following definition of weak solutions of the system \eqref{System1}.  This definition is of course 
also consistent with the notion of (suitable) weak solutions presented in the original work by Caffarelli, Kohn and Nirenberg \cite{CKN}. 

\begin{defn}\label{SuitWeakSoln}
$(u,p,F)$ is a {\it weak solution} of \eqref{System1} on $Q_1$ provided \\\\
$(i)$ $u,F\in L^3(Q_1)$, and $p\in L^{3/2}(Q_1)$, \\\\
$(ii)$ equation \eqref{System1} holds in the sense of distributions on $Q_1$, and \\\\
$(iii)$ for each $\phi\in C^\I_c(Q_1)$, with $\phi\ge 0$
\begin{align}\label{NSEenergy}
\int_{B_1\times\{t\}}\phi\left(\frac{|u|^2}{2}+\frac{|F|^2}{2}\right)dx &+ \int^t_{-1/2}\int_{B_1}\phi\left(|\nabla u|^2+|\nabla F|^2\right)dxds \le  \int^t_{-1/2}\int_{B_1}\left\{(\phi_t+\Delta \phi)\left(\frac{|u|^2}{2}+\frac{|F|^2}{2}\right)\right.\nonumber\\
&\left.  - u\otimes\nabla\phi\cdot FF^t+ \left(\frac{|u|^2}{2}+\frac{|F|^2}{2}+p\right)u\cdot \nabla \phi \right\}dxds
\end{align}
$t\in [-1/2,1/2]$.
\end{defn}

\par Our first order of business is to establish that a bounded sequence of weak solutions has a convergent subsequence
whose limit is again a weak solution.  We will not use this result directly altough the ideas that go into proving this 
result will be essential to our proof of Theorem \ref{mainthm}.    To this end, we start by quoting a relatively standard compactness lemma. 


\begin{lem}\label{LionsCompactness}
Let $X_0, X_1, X_2$ denote Banach spaces, with $X_0$ and $X_2$ reflexive, that satisfy
$$
X_0\subset X_1\subset X_2.
$$
Also suppose that the embedding of $X_0$ into $X_1$ is compact and the embedding of $X_1$ into $X_2$ is continuous. Let $p,q \in (1,\infty)$ and assume that 
$(u_k)_{k\in \N} \in L^p([0,1]; X_0)$ is a bounded sequence such that each $u_k$ has a weak derivative $u_k'$ and the sequence 
$$
(u'_k)_{k\in \N} \in L^q([0,1]; X_2)
$$ 
is also bounded. Then there is a subsequence of $u_k$ converging strongly in $L^p([0,1], X_1)$.
\end{lem}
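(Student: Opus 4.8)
The statement is the Aubin--Lions--Simon compactness lemma, and I would prove it by the classical time-mollification argument. The starting point is \emph{Ehrling's inequality}: for every $\eta>0$ there is a constant $C_\eta$ with
$$
\|v\|_{X_1}\le \eta\,\|v\|_{X_0}+C_\eta\,\|v\|_{X_2}\qquad\text{for all }v\in X_0 .
$$
This follows from a routine compactness-and-contradiction argument: if it failed there would be $v_n$ with $\|v_n\|_{X_1}=1$, $\|v_n\|_{X_0}$ bounded and $\|v_n\|_{X_2}\to 0$; the compact embedding $X_0\hookrightarrow X_1$ gives a subsequence converging in $X_1$ to some $v$ with $\|v\|_{X_1}=1$, while the continuous embedding $X_1\hookrightarrow X_2$ forces $v=0$, a contradiction. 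The point of Ehrling's inequality is that, since $L^p([0,1];X_1)$ is complete, it is enough to exhibit a subsequence of $(u_k)$ that is Cauchy in $L^p([0,1];X_1)$; and by the inequality any such distance is bounded by $\eta$ times a (uniformly bounded) $L^p([0,1];X_0)$ distance plus $C_\eta$ times an $L^p([0,1];X_2)$ distance.

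Next I would introduce the time mollification. After extending each $u_k$ from $[0,1]$ to a slightly larger interval (say by reflection across the endpoints) so that boundary effects cause no trouble, set $u_k^h:=\rho_h * u_k$, where $\rho_h$ is a standard smooth mollifier supported in $(-h,h)$ with unit mass, $0<h<1/2$. Two families of estimates, \emph{uniform in $k$}, are then needed. First, writing $u_k(t-s)-u_k(t)=-\int_0^s u_k'(t-\tau)\,d\tau$ and using the bound on $u_k'$ in $L^q([0,1];X_2)$, Hölder's inequality gives $\|u_k^h-u_k\|_{L^p([0,1];X_2)}\le \omega(h)$ with $\omega(h)\to 0$ as $h\to 0$, uniformly in $k$. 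Second, for each \emph{fixed} $h$ the family $(u_k^h)_k$ is relatively compact in $C([0,1];X_1)$: from $u_k^h(t)=\int\rho_h(s)u_k(t-s)\,ds$ and $(u_k^h)'(t)=\int\rho_h'(s)u_k(t-s)\,ds$ together with Hölder's inequality, both $\|u_k^h(t)\|_{X_0}$ and $\|(u_k^h)'(t)\|_{X_0}$ are bounded by a constant $C_h M$ uniformly in $t\in[0,1]$ and in $k$; hence $\{u_k^h(t):k\in\N\}$ is bounded in $X_0$, so relatively compact in $X_1$ by the compact embedding, and $(u_k^h)_k$ is equi-Lipschitz from $[0,1]$ into $X_1$ via the continuous embedding $X_0\hookrightarrow X_1$. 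Arzelà--Ascoli then yields relative compactness of $(u_k^h)_k$ in $C([0,1];X_1)$, hence in $L^p([0,1];X_1)$.

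Finally I would combine these with a diagonal argument. Passing to a subsequence, I may assume $(u_k^{1/n})_k$ converges in $L^p([0,1];X_1)$ for every $n\in\N$. For this subsequence, given $\varepsilon>0$, decompose
$$
\|u_k-u_l\|_{L^p([0,1];X_1)}\le \|u_k-u_k^h\|_{L^p([0,1];X_1)}+\|u_k^h-u_l^h\|_{L^p([0,1];X_1)}+\|u_l^h-u_l\|_{L^p([0,1];X_1)} .
$$
Ehrling's inequality bounds the first and third terms by $C\eta M+C_\eta\,\omega(h)$, using that mollification is non-expansive on $L^p([0,1];X_0)$; choosing $\eta$ small and then $h=1/n$ small makes these $<\varepsilon/3$, and for that now-fixed $n$ the middle term is $<\varepsilon/3$ once $k,l$ are large, since $(u_k^{1/n})_k$ is Cauchy. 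Thus the subsequence is Cauchy, hence convergent, in $L^p([0,1];X_1)$. (Reflexivity of $X_0$ and $X_2$, together with $1<p,q<\infty$, makes the Bochner spaces reflexive, so a further subsequence converges weakly in $L^p([0,1];X_0)$ with weak derivative the weak $L^q([0,1];X_2)$-limit; this weak limit necessarily coincides with the strong $L^p([0,1];X_1)$-limit just produced, which identifies the limit but is not needed for the compactness conclusion itself.)

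I expect the main obstacle to be bookkeeping in the mollification step: ensuring every constant is genuinely independent of $k$, legitimately extending the $u_k$ past the endpoints of $[0,1]$, and being careful about \emph{which} norm ($X_0$ versus $X_2$) is used where, so that for fixed $h$ one obtains equicontinuity of $(u_k^h)_k$ \emph{into $X_1$} together with pointwise precompactness \emph{in $X_1$}, exactly the two ingredients Arzelà--Ascoli requires. Everything else --- Ehrling, Hölder, Arzelà--Ascoli, and the $\varepsilon/3$ and diagonal arguments --- is routine.
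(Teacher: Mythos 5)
Your argument is correct, but note that the paper does not actually prove this lemma: it simply cites Theorem 2.1 of Chapter III in Temam's book. Your write-up is the classical Ehrling--mollification--Arzel\`a--Ascoli proof (essentially Simon's route), and all the key points are in order: the contradiction argument for Ehrling's inequality, the uniform-in-$k$ estimate $\|u_k^h-u_k\|_{L^p([0,1];X_2)}\le Ch^{1-1/q}\|u_k'\|_{L^q}$ (which is where $q>1$ enters), the fixed-$h$ compactness of $(u_k^h)_k$ in $C([0,1];X_1)$ from pointwise $X_0$-boundedness plus equi-Lipschitz continuity, and the $\varepsilon/3$ diagonal argument. Temam's cited proof is organized differently: he first uses reflexivity of the Bochner spaces to extract a weak limit (reduced to $0$), then combines Ehrling's inequality with a direct estimate based on $u_k(t)=u_k(s)+\int_s^t u_k'$ and time-averaging to upgrade weak to strong convergence. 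Your route buys something: it never uses reflexivity of $X_0$ or $X_2$ (your closing parenthetical correctly observes this), so it actually proves a more general statement; what Temam's approach buys is that it avoids extending the functions past the endpoints and the attendant mollification bookkeeping. Either proof is acceptable here, since the lemma is only quoted in the paper.
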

\begin{rem}
We can replace $[0,1]$ in the statement above with any compact interval of $\R.$
\end{rem}
We refer the reader to Theorem 2.1, section III of \cite{TM} for a detailed proof of the above lemma.  Its primary application is the following theorem.

\begin{thm}\label{CompactSoln}
Suppose that $(u^k,p^k,F^k)_{k\in \N}$ is a sequence of weak solutions of \eqref{System1} on $Q_1$ satisfying 
\begin{equation}\label{LPestUPF}
\iint_{Q_1}\left\{|u^k|^3+|p^k|^{3/2}+|F^k|^3\right\}dxdt\le C
\end{equation}
for all $k\ge 1$ and some constant $C\ge 0$.  Then there is a subsequence $(u^{k_j},p^{k_j},F^{k_j})_{j\in \N}$ and 
$(u,P,F)$, such that 
$$
\begin{cases}
u^{k_j}\rightarrow u \quad \text{in }\quad L^q_{\text{loc}}(Q_1)  \\
p^{k_j}\rightharpoonup P \quad \text{in }\quad L^{3/2}(Q_1)  \\
F^{k_j}\rightarrow F\quad \text{in }\quad L^q_{\text{loc}}(Q_1)  
\end{cases}
$$
for $1\le q<10/3.$  Moreover, $(u,P,F)$ is a weak solution of \eqref{System1} on $Q_1$.
\end{thm}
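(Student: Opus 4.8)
## Proof proposal for Theorem \ref{CompactSoln}

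The plan is to run the standard Lions--Aubin compactness machinery, using the \emph{a priori} bounds that were derived in the motivating discussion above (namely \eqref{uFtenthirds} and \eqref{pfivethirdsEst}) together with the equations \eqref{System1Column} to control the time derivatives, and then to pass to the limit in the weak formulation and in the local energy inequality \eqref{NSEenergy}. First I would upgrade the hypothesis \eqref{LPestUPF}: since $(u^k,p^k,F^k)$ is a weak solution, each one satisfies the local energy inequality, and combining this with an appropriate test-function argument (or invoking that the $L^3,L^{3/2}$ bounds already feed the energy inequality) yields, on each compactly contained subcylinder $Q_\theta$, the energy-type bound \eqref{SpaceTimeIntegral} and hence the $L^{10/3}_{\mathrm{loc}}$ bound \eqref{uFtenthirds} on $u^k,F^k$ and the $L^{5/3}_{\mathrm{loc}}$ bound \eqref{pfivethirdsEst} on $p^k$, all uniformly in $k$. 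In particular $(u^k),(F^k)$ are bounded in $L^\infty_t L^2_x \cap L^2_t H^1_x$ locally, and $(p^k)$ is bounded in $L^{5/3}_{\mathrm{loc}}$.

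Next I would estimate $\partial_t u^k$ and $\partial_t F^k$ in a negative-order space. Fix a cutoff $\zeta\in C_c^\infty(Q_1)$ with $\zeta\equiv 1$ on $Q_\theta$. Using the equations \eqref{System1Column}, for a test vector field $\varphi$ the pairing $\langle \partial_t(\zeta u^k),\varphi\rangle$ is a sum of terms of the form $\int \zeta u^k\otimes u^k:\nabla\varphi$, $\int \zeta F^k_\ell\otimes F^k_\ell:\nabla\varphi$, $\int \nabla(\zeta u^k):\nabla\varphi$, $\int \zeta p^k\,\nabla\cdot\varphi$, plus lower-order terms with derivatives falling on $\zeta$; by Hölder and the uniform bounds just listed (the quadratic terms live in $L^{5/3}_{t,x}$ since $10/3 = 2\cdot 5/3$, and $p^k\in L^{5/3}$) this is bounded by $\|\varphi\|$ in a space like $L^{5/2}_t W^{1,5/2}_x$, giving a uniform bound on $\partial_t(\zeta u^k)$ in $L^{5/3}_t W^{-1,5/3}_x$; the same works for $F^k$. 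With $X_0 = H^1(B_\theta)$, $X_1 = L^2(B_\theta)$, $X_2 = W^{-1,5/3}(B_\theta)$ (reflexive, with $X_0\hookrightarrow\hookrightarrow X_1\hookrightarrow X_2$), Lemma \ref{LionsCompactness} yields a subsequence along which $\zeta u^k$, $\zeta F^k$ converge strongly in $L^2_t L^2_x(Q_\theta)$; interpolating this $L^2$ strong convergence against the uniform $L^{10/3}$ bound gives strong convergence in $L^q_{\mathrm{loc}}(Q_1)$ for every $q<10/3$. A diagonal argument over $\theta\uparrow 1$ and the weak compactness of $(p^k)$ in $L^{3/2}(Q_1)$ produce the claimed limits $(u,P,F)$, with also $\nabla u^k\rightharpoonup\nabla u$, $\nabla F^k\rightharpoonup\nabla F$ weakly in $L^2_{\mathrm{loc}}$.

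Finally I would check that $(u,P,F)$ is a weak solution. Passing to the limit in \eqref{System1} in the sense of distributions is routine: the linear terms pass by weak convergence of $u^k,\nabla u^k,p^k$, while the quadratic terms $u^k\otimes u^k$ and $F^k_\ell\otimes F^k_\ell$ pass because strong $L^q_{\mathrm{loc}}$ convergence with $q\ge 2$ (take any $q\in(2,10/3)$) implies $L^1_{\mathrm{loc}}$ convergence of the products, and $\nabla\cdot u = \nabla\cdot F_j = 0$ are preserved under weak limits; hence (i) and (ii) hold. For the local energy inequality (iii), fix $\phi\in C_c^\infty(Q_1)$, $\phi\ge 0$, write \eqref{NSEenergy} for $(u^k,p^k,F^k)$, and take $k\to\infty$: on the right side every term is either linear in $p^k$ (weak $L^{3/2}$) paired with the bounded factor $u^k$ converging strongly, or at most cubic in $(u^k,F^k)$ which converges strongly in $L^3_{\mathrm{loc}}$, so the right side converges to the corresponding expression for $(u,P,F)$; on the left, the dissipation term is weakly lower semicontinuous, $\int|\nabla u|^2 \le \liminf\int|\nabla u^k|^2$ (and likewise for $F$) since $\sqrt\phi\,\nabla u^k\rightharpoonup\sqrt\phi\,\nabla u$ in $L^2$, while the kinetic term at time $t$ requires a brief argument --- one integrates \eqref{NSEenergy} in $t$ over a small interval or uses that strong $L^2_{t,x}$ convergence gives convergence of $\int_{B_1\times\{t\}}\phi(|u^k|^2+|F^k|^2)$ for a.e.\ $t$, which combined with lower semicontinuity suffices. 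The main obstacle is precisely this last point: obtaining the time-derivative bound uniformly in $k$ up to the lateral boundary requires the cutoff bookkeeping above, and then handling the slice term $\int_{B_1\times\{t\}}\phi(\cdots)$ in \eqref{NSEenergy} at an individual time $t$ (rather than in a time-averaged sense) needs the fact --- standard for Leray--Hopf-type solutions --- that $t\mapsto\int\phi(|u^k|^2+|F^k|^2)$ is, after modification on a null set, lower semicontinuous and that the strong $L^2_{t,x}$ convergence plus Fatou across slices delivers the inequality for every $t\in[-1/2,1/2]$.
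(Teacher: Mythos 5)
Your proposal is correct and follows essentially the same route as the paper: uniform local energy bounds from \eqref{NSEenergy}, a time-derivative bound in a negative Sobolev space feeding Lemma \ref{LionsCompactness} with $X_0=H^1(B_\theta)$, $X_1=L^2(B_\theta)$, interpolation against the uniform $L^{10/3}$ bound, a diagonal argument in $\theta$, and passage to the limit using strong convergence for the nonlinear terms and weak lower semicontinuity for the dissipation. The only differences are cosmetic: you place $\partial_t(\zeta u^k)$ in $L^{5/3}_tW^{-1,5/3}_x$ via the $L^{10/3}$ and $L^{5/3}$ bounds, whereas the paper uses the hypothesis \eqref{LPestUPF} directly to land in $L^{3/2}_tW^{-1,3}_x$ (both choices of $X_2$ satisfy the hypotheses of the compactness lemma), and you spell out the limit passage in the energy inequality that the paper declares immediate.
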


\begin{rem}
It is straightforward to adapt the proof below to build globally defined weak solutions of a large class of initial value problems associated with  \eqref{System1}.  One may 
use a Galerkin-type approximation, for example.
\end{rem}

\begin{proof} First, we select the weak limit $P$ of the sequence $(p^k)_{k\in \N}$ and without any loss of generality assume $p^k\rightharpoonup P$ in $L^{3/2}(Q_1)$ as $k\rightarrow \infty.$  Next, we observe that
by \eqref{System1Column}, we have that 
$$
\partial_t u^k = \nabla\cdot \left(-u^k\otimes u^k +\nabla u^k - p^kI +F^k(F^k)^t\right) 
$$
and  
$$
\partial_t F^k_j = \nabla\cdot \left(- F^k_j\otimes u^k  +u^k\otimes F^k_j+ \nabla F^k_j\right), \quad j=1,2,3
$$
belong to the space
$$
L^{3/2}((-1/2,1/2); W^{-1,3}(B_1)).
$$
By \eqref{LPestUPF},  $\partial_t u^k $ and $\partial_t F^k$ are in fact bounded in this space
for all $k\ge 1$.  

\par For a fixed $\theta \in (0,1)$, we note that as $(u^k,p^k,F^k)$ satisfies the local energy estimate \eqref{NSEenergy}
\begin{equation}\label{GradTHETA}
\iint_{Q_\theta}|\nabla u^k|^2+|\nabla F^k|^2dxdt\le C
\end{equation}
for some $C=C(\theta)$. These observations lead to the choice of exponents
$$
p:=2\quad\text{and}\quad q:=3/2
$$
and Banach spaces
$$
\begin{cases}
X_0:=H^1(B_\theta)\\
X_1:=L^2(B_\theta)\\
X_2:= W^{-1,3}(B_\theta)
\end{cases}.
$$
\par  
By our observations above, the hypotheses of the previous lemma are satisfied with the sequences $(u^k)_{k\in \N}$ and $(F^k)_{k\in \N}$  (and exponents and Banach spaces as indicated above).  Hence, some subsequence of $(u^k)_{k\in \N}$, $(F^k)_{k\in \N}$ converge in $L^2(Q_\theta)$ to some $u_\theta,F_\theta$. 
By the bound \eqref{GradTHETA}, $u^k,F^k$ are bounded in $L^{10/3}(Q_\theta)$ (recall the estimate \eqref{uFtenthirds}). Consequently, the interpolation of Lebesgue spaces implies $(u^k,F^k)\rightarrow (u_\theta,F_\theta)$ as $k\rightarrow$ in $L^q(Q_\theta)$ for $1\le q<10/3.$ As $\theta\in (0,1)$ was arbitrary, we can employ a routine diagonalization argument to construct a $u,F\in L^{10/3}_{\text{loc}}(Q_1)$ such that a subsequence of $u^k$ and $F^k$ converge respectively to $u$ and $F$ in $L^q_{\text{loc}}(Q)$ for $1\le q<10/3.$
It is now immediate to pass to the limit as $k\rightarrow \infty$ and show that $(u,P,F)$ is a solution of \eqref{System1} in the distributional sense and that \eqref{NSEenergy} holds.  Therefore, $(u,P,F)$ is a weak solution of \eqref{System1} on $Q_1$.
\end{proof}
Another application of the compactness lemma is the following ``blow-up" or ``decay" lemma; these names come from the fact that the lemma's proof uses a rescaling and blow-up argument, while the conclusion involves a type of decay. This is arguably the most important step in the proof of Theorem \ref{mainthm}.  These ideas originated with the groundbreaking work of Lin \cite{L}. However, the specific approach we use here follows closely the reinterpretation by  Seregin and Ladyzhenskaya \cite{LS}.  

\begin{lem}\label{DecayLem}
Let $(u,p,F)$ be a weak solution on $Q_1$ and set 
$$
E(x,t,r):= \left( \ffint |u-u_{Q_r}|^3dyds\right)^{1/3}+r\left(\ffint |p-p_{B_r}(t)|^{3/2}dyds \right)^{2/3}+ \left( \ffint|F-F_{Q_r}|^3dyds \right)^{1/3}
$$
for $Q_r=Q_r(x,t)$ and $B_r=B_r(x)$, where 
$$
u_{Q_r}:=\ffint u(y,s)dyds\quad \text{and}\quad p_{B_r}(s):=\gfint p(y,s)dy.
$$ For each  
$$
0<\theta <1/2 \quad \text{and}\quad M>0
$$
there are positive numbers $\epsilon_1, R_1, c_1$ such that if 
$$
\begin{cases}
(i) \quad Q_r(x,t)\subset Q_1, \; r\le R_1\\\\
(ii) \quad |u_{Q_r(x,t)}|+ |F_{Q_r(x,t)}|\le M\\\\
(iii) \quad E(x,t,r)\le \epsilon_1
\end{cases},
$$
then 
$$
E(x,t,\theta r )\le c_1 \theta^{2/3}E(x,t,r).
$$
Moreover, $c_1$ can be chosen independent of $\theta$. 
\end{lem}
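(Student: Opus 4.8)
\section*{Proof proposal}

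The plan is to argue by contradiction via the blow-up/compactness method of Lin \cite{L}, in the form of Ladyzhenskaya--Seregin \cite{LS}. The constant will come out as $c_1=2C$, where $C$ is the absolute constant in a Campanato-type decay estimate for a limiting linear system produced below; in particular $c_1$ will be independent of $\theta$. Fix $\theta\in(0,1/2)$ and $M>0$, and suppose that for $c_1=2C$ no admissible pair $(\epsilon_1,R_1)$ exists; taking $\epsilon_1=R_1=1/k$ then gives, for each $k$, a weak solution $(u^k,p^k,F^k)$ of \eqref{System1} on $Q_1$, a point $(x_k,t_k)$, and a radius $r_k\le 1/k$ with $Q_{r_k}(x_k,t_k)\subset Q_1$, such that, writing $a_k:=u^k_{Q_{r_k}(x_k,t_k)}$ and $b_k:=F^k_{Q_{r_k}(x_k,t_k)}$, one has $|a_k|+|b_k|\le M$ and $E_k:=E(x_k,t_k,r_k)\le 1/k$, but $E(x_k,t_k,\theta r_k)>c_1\theta^{2/3}E_k$. (If $E_k=0$ then $u^k,F^k$ are a.e.\ constant on $Q_{r_k}$ and $p^k$ depends only on $t$ there, so the claimed inequality holds trivially; hence $E_k>0$.) Passing to a subsequence, $a_k\to a$ and $b_k\to b$ with $|a|+|b|\le M$. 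I rescale and recenter, setting on $Q_1$
\[
v^k(y,s):=\frac{u^k(x_k+r_ky,\,t_k+r_k^2s)-a_k}{E_k},\qquad G^k(y,s):=\frac{F^k(x_k+r_ky,\,t_k+r_k^2s)-b_k}{E_k},
\]
\[
q^k(y,s):=\frac{r_k}{E_k}\Bigl(p^k(x_k+r_ky,\,t_k+r_k^2s)-p^k_{B_{r_k}(x_k)}(t_k+r_k^2s)\Bigr).
\]
A change of variables turns the normalization $E_k=E(x_k,t_k,r_k)$ into a uniform bound: $v^k,G^k$ are bounded in $L^3(Q_1)$ with zero mean over $Q_1$, $q^k$ is bounded in $L^{3/2}(Q_1)$ with zero spatial mean over $B_1$ for a.e.\ $s$, and the three relevant averaged norms over $Q_1$ in fact sum to $1$; likewise the failure of the conclusion becomes $\mathcal E_k(\theta)>c_1\theta^{2/3}$, where $\mathcal E_k(\theta)$ is the functional defining $E$ evaluated for $(v^k,q^k,G^k)$ on $Q_\theta$.

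The essential step is a uniform energy estimate
\[
\sup_{|s|\le 1/8}\int_{B_{1/2}}\bigl(|v^k|^2+|G^k|^2\bigr)\,dy+\iint_{Q_{1/2}}\bigl(|\nabla v^k|^2+|\nabla G^k|^2\bigr)\,dyds\le C.
\]
This follows from the local energy inequality \eqref{NSEenergy} for $(u^k,p^k,F^k)$, tested with a cut-off adapted to $Q_{r_k}(x_k,t_k)$, after subtracting the constants $a_k,b_k$ from $u^k,F^k$; that a ``shifted'' version of \eqref{NSEenergy} is available uses the divergence-free conditions $\nabla\cdot u=\nabla\cdot F_j=0$ and the algebraic cancellation in \eqref{System1Column} between $\nabla\cdot FF^t$ and $\sum_j(F_j\cdot\nabla)u$. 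It is precisely here that hypotheses (i)--(iii) enter — the smallness of $r$ and of $E(x,t,r)$, and the bound $M$ on the means — in order to absorb the error terms, and this is where the dependence of $\epsilon_1,R_1$ on $\theta$ and $M$ originates (while $c_1$ will not depend on them). Granting the estimate, $v^k,G^k$ are bounded in $L^{10/3}(Q_{1/2})$ by the interpolation inequality \eqref{InterpolationINEQ} (cf.\ \eqref{uFtenthirds}), and the rescaled equations give $\partial_sv^k,\partial_sG^k$ bounded in $L^{3/2}\bigl((-\tfrac18,\tfrac18);W^{-1,3}(B_{1/2})\bigr)$. Lemma \ref{LionsCompactness} (with $X_0=H^1$, $X_1=L^2$, $X_2=W^{-1,3}$ on a ball) together with interpolation of Lebesgue spaces then yields, along a subsequence, $v^k\to v$ and $G^k\to G$ strongly in $L^3_{\mathrm{loc}}(Q_{1/2})$ and $q^k\rightharpoonup q$ weakly in $L^{3/2}(Q_{1/2})$. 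Since weak convergence of $q^k$ does not suffice to pass to the limit in the pressure part of $\mathcal E_k(\theta)$ from above, I split $q^k=h^k+g^k$ on $Q_{1/2}$, with $h^k(\cdot,s)$ harmonic in $B_{1/2}$ carrying the boundary data of $q^k(\cdot,s)$; the divergence of the rescaled version of \eqref{pEq} shows $-\Delta g^k=O(r_k)$ in a suitable negative norm, so $g^k\to 0$ strongly in $L^{3/2}(Q_{1/2})$, while $h^k$ is bounded and harmonic in $x$, hence precompact in $C^\infty_{\mathrm{loc}}$; this is enough.

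To identify the limit: every nonlinear term in the rescaled equations carries a factor $r_k$ or $r_kE_k$ (and so does the quadratic source in the rescaled \eqref{pEq}), so letting $k\to\infty$ shows that $v$ solves the non-stationary Stokes system $\partial_sv-\Delta v+\nabla q=0$, $\nabla\cdot v=0$, and each column $G_j$ solves the heat equation $\partial_sG_j-\Delta G_j=0$, $\nabla\cdot G_j=0$, on $Q_{1/2}$, and the normalization passes to the limit as $\|v\|_{L^3(Q_1)}+\|q\|_{L^{3/2}(Q_1)}+\|G\|_{L^3(Q_1)}\le C$ (the limit problem and this bound do not see $M$). Interior regularity for the Stokes and heat systems bounds $v,q,G$ and their first derivatives on $Q_{1/4}$ by an absolute constant, and a first-order Taylor/Campanato estimate gives $\mathcal E(\theta)\le C\theta$ for all $\theta\in(0,1/2)$, with $C$ absolute; since $\theta<1$ this yields $\mathcal E(\theta)\le C\theta^{2/3}$. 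Now take $c_1:=2C$. By the strong convergence of the previous paragraph, $\mathcal E_k(\theta)\to\mathcal E(\theta)\le C\theta^{2/3}$, so $\mathcal E_k(\theta)<c_1\theta^{2/3}$ for all large $k$, contradicting $\mathcal E_k(\theta)>c_1\theta^{2/3}$. This proves the lemma, with $c_1=2C$ independent of $\theta$.

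The decisive obstacle is the uniform energy estimate of the second paragraph: after the shift by $a_k,b_k$, the terms of \eqref{NSEenergy} are no longer manifestly small once divided by $E_k^2$, and controlling them — which is where the cancellation structure of \eqref{System1Column}, the constraint $\nabla\cdot F_j=0$, and the smallness of $r$ and of $E(x,t,r)$ are all genuinely used (possibly with a case distinction according to whether $E(x,t,r)$ is small or large relative to $r$) — is the delicate point. The remaining ingredients — the compactness via Lemma \ref{LionsCompactness}, the harmonic/remainder splitting of the pressure, and the linear interior estimates at the end — are standard.
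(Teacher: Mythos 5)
Your proposal is correct and follows essentially the same route as the paper's proof: contradiction, rescaling normalized by $E_k$, a uniform local energy bound, compactness via Lemma \ref{LionsCompactness}, a smooth linear limit system, and the harmonic-plus-small-remainder splitting of the pressure to recover strong control of the $L^{3/2}$ oscillation at scale $\theta$ (which, as you note, weak convergence alone would not give).

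One clarification on the step you single out as the ``decisive obstacle.'' The uniform energy estimate for the shifted, rescaled triple is not delicate and needs no case distinction: once $(u^k,p^k,F^k)$ is normalized so that the $L^3\times L^{3/2}\times L^3$ norms over $Q_1$ sum to one (the analogue of \eqref{IntegralboundsSequence}), every term on the right-hand side of the shifted energy inequality \eqref{EnergySequence} is at worst cubic in the normalized unknowns and linear in the bounded quantities $|a^k|+|b^k_j|\le M$ and $\epsilon_k\le 1$, so the right-hand side is bounded by an absolute constant times $1+M$; the hypotheses (i)--(iii) are used only to guarantee these bounds, not to absorb error terms. (Deriving the shifted inequality itself from \eqref{NSEenergy} and the weak form of the equations is the routine-but-tedious computation you correctly identify, using $\nabla\cdot u=\nabla\cdot F_j=0$.) The only other divergence from the paper is cosmetic: you observe that the drift and coupling terms carry a factor $r_k$ and hence drop out, so your limit system is the decoupled Stokes/heat system, whereas the paper retains the constant drifts $a$, $b_j$ in \eqref{LIMITPDEupF}; either way the limit is a constant-coefficient linear system whose interior smoothness yields the $C\theta^{2/3}$ decay, so the contradiction closes in both versions.
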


\begin{rem}
In particular we can arrange $c_1\theta^{2/3}$ to be small, so this is a type of decay lemma. 
\end{rem}

\begin{proof}
1. We argue by contradiction. Suppose the statement of the lemma is false. Then there are sequences
$$
\begin{cases}
(x_k,t_k)\in Q_1\\
\epsilon_k\rightarrow 0\\
r_k\rightarrow 0\\
c_k\equiv c_1 \; (\text{which will be chosen sufficiently large below})
\end{cases}
$$
with 
\begin{equation}\label{SeqBounds}
\begin{cases}
(i) \quad Q_{r_k}(x_k,t_k)\subset Q_1,\\\\
(ii) \quad | u_{Q_{r_k}(x_k,t_k)}|+ | F_{Q_{r_k}(x_k,t_k)}|\le M_0\\\\
(iii) \quad E(x_k,t_k,r_k)= \epsilon_k
\end{cases},
\end{equation}
and 
\begin{equation}\label{WhatWeDontWant}
E(x_k,t_k,\theta_0 r_k)>c_1\theta_0^{2/3}E(x_k,t_k,r_k).
\end{equation}
We set
$$
\begin{cases}
u^k(y,s):=\epsilon_k^{-1}\left(u(x_k+r_k y, t_k + r_k^2 s)- u_{Q_{r_k}(x_k,t_k)}\right) \\\\
p^k(y,s):=r_k\epsilon_k^{-1}\left(p(x_k+r_k y, t_k + r_k^2 s)- p_{B_{r_k}(x_k)}(t_k+r_k^2s)\right) \\\\
F^k(y,s):=\epsilon_k^{-1}\left(F(x_k+r_k y, t_k + r_k^2 s)- F_{Q_{r_k}(x_k,t_k)}\right)
\end{cases}, \quad (y,s)\in Q_1,
$$
and observe that $(u^k,p^k, F^k)$ is a weak solution (defined analogously as in Definition \ref{SuitWeakSoln}) of the PDE

\begin{equation}\label{PDEsequence}
\begin{cases}
\hspace{.07in}\partial _tu^k + ((a^k+\epsilon_k u^k)\cdot \nabla)u^k =\Delta u^k - \nabla p^k + \sum^3_{j=1}((b^k_j +\epsilon_k F^k_j)\cdot\nabla) F^k_j\\
\partial _tF^k_j + ((a^k+\epsilon_k u^k)\cdot \nabla)F^k_j  =\Delta F^k_j  + ((b^k_j +\epsilon_k F^k_j)\cdot\nabla) u^k\\
\hspace{1.5in}\nabla \cdot u^k=\nabla\cdot F^k_j=0
\end{cases}, (y,s)\in Q_1.
\end{equation}
for $j=1,2,3.$ Here 
$$
a^k:= u_{Q_{r_k}(x_k,t_k)} \quad \text{and}\quad b^k_j:=(F_j)_{Q_{r_k}(x_k,t_k)} 
$$ 
are bounded sequences in $\R^3$ for each $j=1,2,3$ by \eqref{SeqBounds}.  Moreover, the sequence $(u^k,p^k, F^k)$ satisfy the integral bounds
\begin{equation}\label{IntegralboundsSequence}
 \left(\fffint|u^k|^3 \right)^{1/3}+\left( \fffint|p^k|^{3/2} \right)^{2/3}+ \left(\fffint|F^k|^3 \right)^{1/3}=\frac{E(x_k,t_k,r_k)}{\epsilon_k}=1
\end{equation}
and the generalized energy inequality          
{\small \begin{align}\label{EnergySequence}
\int_{B_1\times\{s\}}\phi\left(\frac{ |u^k|^2}{2}+\frac{ |F^k|^2}{2}\right)dx + \int^{s}_{-1/2}\int_{B_1}\phi\left(|\nabla u^k|^2+|\nabla F^k|^2\right)dxds \le  \int^{s}_{-1/2}\int_{B_1}\left[
\left(\frac{|u^k|^2}{2}+\frac{|F^k|^2}{2}\right)(\phi_t+\Delta \phi) \right. \nonumber \\
\left. + \left\{  -\sum^3_{j=1}u^k\cdot F^k_j(b^k_j+\epsilon_k F^k_j)+\left(\frac{|u^k|^2}{2}+\frac{|F^k|^2}{2}\right)(a^k+\epsilon_k u^k)+p^k u^k\right\}\cdot \nabla \phi \right]dxds
\end{align}}
for $\phi\in C^\I_c(B), \phi\ge 0$ and $s\in (-1/2,1/2).$ 

\par 2. Arguing as we did in the proof of the Theorem \ref{CompactSoln} one checks that the sequence $(u^k)_{k\in \N}$ and $(F^k)_{k\in \N}$ satisfy the hypotheses of the Lemma \ref{LionsCompactness}. Moreover, we conclude that there is $u\in L^3_{\text{loc}}(Q_1)$, $p\in L^{3/2}_{\text{loc}}(Q_1)$, and $F\in L^3_{\text{loc}}(Q_1)$ and subsequences of $(u^{k})_{k\in \N}$, $(p^{k})_{k\in \N}$, and $(F^{k})_{k\in \N}$ (again labeled $u^k,p^k,F^k$) such that
$$
\begin{cases}
u^{k}\rightarrow u \; \text{in}\;  L^3_{\text{loc}}(Q_1)\\
p^{k}\rightharpoonup p \; \text{in}\;  L^{3/2}(Q_1)\\
F^{k}\rightarrow F \; \text{in}\;  L^3_{\text{loc}}(Q_1)
\end{cases}
$$
as $k\rightarrow \I.$ From \eqref{SeqBounds}, we may assume without any loss of generality that that 
$$
\begin{cases}
a^k \rightarrow a\\
b^k_j \rightarrow b_j
\end{cases}, \quad j=1,2,3
$$
in $\R^3$, as $k\rightarrow \infty$. 
\par It is follows from this convergence that $(u,p,F)$ are weak solutions of the following linear PDE
\begin{equation}\label{LIMITPDEupF}
\begin{cases}
\hspace{.15in}\partial_t u+ (a\cdot \nabla)u =\Delta u - \nabla p + \sum^3_{j=1}(b_j\cdot\nabla)F_j\\
\partial_t F_j + (a\cdot \nabla)F_j =\Delta F_j  + (b_j\cdot\nabla)u\\
\hspace{.8in}\nabla \cdot u=\nabla \cdot F_j=0
\end{cases}
\end{equation}
and satisfy the energy inequality 
{\small
\begin{align}\label{EnergyLimit}
\int_{B_1\times\{s\}}\phi\left(\frac{ |u|^2}{2}+\frac{ |F|^2}{2}\right)dx + \int^{s}_{-1/2}\int_{B_1}\phi\left(|\nabla u|^2+|\nabla F|^2\right)dxds \le  \int^{s}_{-1/2}\int_{B_1}\left[
\left(\frac{|u|^2}{2}+\frac{|F|^2}{2}\right)(\phi_t+\Delta \phi) \right. \nonumber \\
\left. + \left\{  -\sum^3_{j=1}(u\cdot F_j)b_j+\left(\frac{|u|^2}{2}+\frac{|F|^2}{2}\right)a+p u\right\}\cdot \nabla \phi \right]dxds
\end{align}}
for $\phi\in C^\I_c(B), \phi\ge 0$ and $s\in (-1/2,1/2).$ 

\par 3. We claim that there is a constant $C_1=C_1(M_0)$ such that 
\begin{equation}\label{LIMreguF}
\begin{cases}
|u(x,t)-u(y,s)|\le C_1(|x-y| +|t-s|^{1/3})\\
|F(x,t)-F(y,s)|\le C_1(|x-y| +|t-s|),
\end{cases}
\end{equation}
for $ (x,t),(y,s)\in Q_{1/2}$.  In particular, we are asserting that $u$ is H\"{o}lder continuous and $F$ is Lipschitz continuous on $Q_{1/2}.$  

\par To see
this, we first take the divergence of the first equation in \eqref{LIMITPDEupF} to get for almost every $t\in (-1/2,1/2)$ 
$$
-\Delta p(x,t)=0, \quad x\in B_1.
$$
Thus $p(t)\in C^\infty(B_1)$ for almost every $t\in (-1/2,1/2)$.  Recall the following estimate for harmonic functions: $-\Delta h=0$, in $B_1\subset \R^n$, then for each multiindex $\alpha$, 
$q\ge 1$, and $\theta \in (0,1)$, there is a $C(q,\alpha)$ such that
\begin{equation}\label{CriticalHarmonicEst}
|\partial^\alpha h|_{L^\I(B_\theta)}\le \frac{C(q,\alpha)}{(1-\theta)^{|\alpha|+n}}|h|_{L^q(B_1)}
\end{equation}
(see Theorem 7, page 29 of \cite{Evans}).  
As $p\in L^{3/2}(Q_1)$, we conclude that 
$$
\partial_x^\alpha p\in L^{3/2}((-1/2,1/2), L^\I_{\text{loc}}(B_1))
$$
for each multiindex $\alpha.$

\par Next, we take the curl of the first two equations in \eqref{LIMITPDEupF} to arrive at

\begin{equation}\label{CURLeqwrj}
\begin{cases}
\hspace{.02in}\partial_t w+ (a\cdot \nabla)w =\Delta w+ \sum^3_{j=1}(b_j\cdot\nabla)r_j\\
\partial_t r_j + (a\cdot \nabla)r_j =\Delta r_j  + (b_j\cdot\nabla)w
\end{cases}
\end{equation}
where of course
$$
w:=\nabla\times u\quad \text{and}\quad r_j:=\nabla\times F_j.
$$
From the local energy estimate for $(u,p,F)$ \eqref{EnergyLimit}, we have $w,r_j \in L^2_{\text{loc}}(Q_1)$.  As
$$
\begin{cases}
\hspace{.02in}\partial_t w -\Delta w = \nabla\cdot \left(\sum^3_{j=1}r_j\otimes b_j-w\otimes a\right) \in L^2((-1/2,1/2),H^{-1}(B_1))\\
\partial_t r_j -\Delta r_j =  \nabla\cdot\left( w\otimes b_j -r_j\otimes a\right) \in L^2((-1/2,1/2),H^{-1}(B_1))
\end{cases}
$$
It follows from standard energy estimates for the parabolic system \eqref{CURLeqwrj} that 
$$
w,r_j\in L^{\I}((-1/2,1/2), L^2(B_1))\cap L^{2}((-1/2,1/2), H^1(B_1)).
$$
As \eqref{CURLeqwrj} is linear,  and in particular each of the derivatives $\partial_x^\alpha w,\partial_x^\alpha r_j$ also satisfy \eqref{CURLeqwrj}, we  conclude by induction 
that for each multiindex $\alpha$
$$
\partial_x^\alpha w,\partial_x^\alpha r_j\in L^{\I}((-1/2,1/2), L^2(B_1))\cap L^{2}((-1/2,1/2), H^1(B_1)).
$$
Therefore,  each space variable of both $w$ and $r_j$ is locally bounded on $Q_1.$
\par Fom the condition $\nabla\cdot u=0$,  we have
\begin{equation}\label{CURLID}
-\Delta u=\nabla\times w. 
\end{equation}
In particular, the Biot-Savart law reads for for each compact $G\subset B_1$,
$$
u(x,t)=\int_{G}\nabla \phi(x-y)\times w(y,t)dy+ A(x,t), \quad (x,t)\in G\times (-1/2,1/2)
$$
where $G\ni x\mapsto A(x,t)$ is harmonic for almost each $t\in (-1/2,1/2)$ (see Lemma 2 of \cite{Serrin} for a proof).  From the local energy estimate for the solution $(u,p,F)$ \eqref{EnergyLimit}, we know that the $L^2(G)$ norm 
of $u$ is bounded independently of $t$ belonging to compact subintervals of $(-1/2,1/2)$. From our estimates on $w$, we conclude that the $L^2(G)$ norm of $A$ is also bounded independently of such $t$. As $A$ is harmonic, it is immediate from the mean value property that $A$ is locally bounded on $F\times I$, for each interval $I\subset (-1/2,1/2).$ Hence $u$ is locally bounded on $Q_1$; differentiating \eqref{CURLID} and recalling our estimates on $w$, we also see that  $\partial^\alpha_x u$ is locally bounded on $Q_1$.  

\par Arguing in the same manner, we have that $\partial^\alpha_xF_j$ is locally bounded on $Q_1$ for each multiindex $\alpha$ and $j=1,2,3$.  It is now immediate from equation \eqref{LIMITPDEupF} that 
$$
\partial_t u= -(a\cdot \nabla)u +\Delta u - \nabla p + \sum^3_{j=1}(b_j\cdot\nabla)F_j\in L^{3/2}_{\text{loc}}((-1/2,1/2), L^\infty_{\text{loc}}(B_1))
$$
and 
$$
\partial_t F_j =- (a\cdot \nabla)F_j +\Delta F_j  + (b_j\cdot\nabla)u\in L^{\infty}_{\text{loc}}((-1/2,1/2), L^\infty_{\text{loc}}(B_1))
$$
from which the claimed estimates \eqref{LIMreguF} readily follow.

\par 4. Therefore we have that there is constant $C_2=C_2(M_0)$ such that 
$$
\left(\fthetaint |u-u_{Q_{\theta_0}}|^3dyds\right)^{1/3}+\left(\fthetaint |F-F_{Q_{\theta_0}}|^3dyds\right)^{1/3}\le C_2 \theta_0^{2/3}.
$$
By the convergence in $L^3_{\text{loc}}(Q_1)$ of $(u^k,F^k)$ to $(u,F)$ as $k\rightarrow \infty$,
\begin{equation}\label{uandfcont}
\left(\fthetaint |u^k-u^k_{Q_{\theta_0}}|^3dyds\right)^{1/3}+\left(\fthetaint |F^k-F^k_{Q_{\theta_0}}|^3dyds\right)^{1/3}\le 2 C_2 \theta_0^{2/3}
\end{equation}
for all $k\in \N$ sufficiently large. 

\par  Let us now derive an estimate on the average of the $p^k$.  Direct computation from \eqref{PDEsequence} gives 
$$
-\Delta p^k=\epsilon_k \nabla \cdot [(u^k\cdot \nabla)u^k - \sum^3_{j=1}(F^k_j\cdot \nabla)F^k_j],\quad x\in B_1
$$
for for almost every $ s\in (-1/2,1/2)$. For each $ s\in (-1/2,1/2)$ that this equation holds weakly on $B_1$, let $g^k(s)$ denote the unique solution of the PDE
$$
\begin{cases}
-\Delta g^k=\epsilon_k \nabla \cdot [(u^k(s)\cdot \nabla)u^k(s) - \sum^3_{j=1}(F^k_j(s)\cdot \nabla)F^k_j(s)], \quad x\in B_{3/4}\\
\hspace{.25in} g^k=0, \quad x\in \partial B_{3/4}
\end{cases}
$$
and set
$$
h^k(s):=p^k(s)-g^k(s).
$$
Clearly, $h^k(s)$ is harmonic on $B_{3/4}$.  Virtually the same argument that lead us to the bound \eqref{pfivethirdsEst} provides the estimate
$$
\iint_{Q_{1/2}}|g^k|^{3/2}dyds\le C\epsilon_k
$$ 
where $C$ depends (say) only on an upper bound for the integrals $\iint_{Q_{5/6}}\left(|\nabla u^k|^2+|\nabla F^k|^2\right)dyds$.  The local energy estimate
 for the solution $(u^k,p^k,F^k)$ \eqref{EnergySequence} combined with the $L^3(Q_1)$ bounds \eqref{IntegralboundsSequence} for $(u^k,F^k)$ assures us that these integrals are uniformly bounded above.  Therefore, by these remarks and the estimate \eqref{CriticalHarmonicEst} 
 
\begin{eqnarray}
\theta_0 \left(\fthetaint |p^k-p^k_{B_{\theta_0}}|^{3/2}\right)^{2/3}&\le & \theta_0 \left(\fthetaint |g^k-g^k_{B_{\theta_0}}|^{3/2}\right)^{2/3}+ \theta_0 \left(\fthetaint |h^k-h^k_{B_{\theta_0}}|^{3/2}\right)^{2/3} \nonumber \\
&\le & 2 \theta_0\left(\fthetaint |g^k|^{3/2}\right)^{2/3}+ C\theta_0 \left(\frac{1}{\theta_0^5}\int^{\theta_0^2/2}_{-\theta_0^2/2}\int_{B_{\theta_0}}|h-h_{B_{\theta_0}}|^{3/2}dyds \right)^{2/3} \nonumber \\
&\le & \frac{2\theta_0}{\theta_0^{10/3}}\left(\iint_{Q_{\theta_0}} |g^k|^{3/2}\right)^{2/3}+ C\theta_0 \left(\frac{1}{\theta_0^5}\int^{\theta_0^2/2}_{-\theta_0^2/2}\int_{B_{\theta_0}}(\theta_0 |\nabla h^k(s)|_{L^\I(B_{\theta_0})})^{3/2}ds \right)^{2/3}  \nonumber \\
&\le &  \frac{2\theta_0}{\theta_0^{10/3}}\left(\iint_{Q_{1/2}} |g^k|^{3/2}\right)^{2/3}+ C\theta_0^{2/3} \left(\int^{\theta_0^2/2}_{-\theta_0^2/2}|\nabla h^k(s)|_{L^\I(B_{\theta_0})}^{3/2}ds \right)^{2/3}  \nonumber \\
&\le &  \frac{C \theta_0\epsilon_k}{\theta_0^{10/3}} + C\theta_0^{2/3} \left(\int^{\theta_0^2/2}_{-\theta_0^2/2}| h^k(s)|_{L^\I(B_{5/8})}^{3/2}ds \right)^{2/3}  \nonumber\\
&\le & \frac{C \epsilon_k}{\theta_0^{7/3}} + C\theta_0^{2/3} \left(\iint_{Q_{5/8}}|h^k|^{3/2}dyds \right)^{2/3}  \nonumber\\
&\le & \frac{C \epsilon_k}{\theta_0^{7/3}} + C\theta_0^{2/3} \left(\iint_{Q_{5/8}}(|g^k|^{3/2}+|g^k|^{3/2})dyds \right)^{2/3}  \nonumber \\
&\le & \frac{C \epsilon_k}{\theta_0^{7/3}}+ C\theta_0^{2/3}.  \nonumber
\end{eqnarray}
Consequently, for $k$ sufficiently large 
\begin{equation}\label{PressureCont}
\theta_0\left(\fthetaint|p^k-p^k_{B_{\theta_0}}|^{3/2}dyds\right)^{2/3}\le C_3 \theta_0^{2/3}
\end{equation}
for some universal constant $C_3$ (since for all large $k$ we have $\epsilon_k\le \theta_0^3)$.  Combining \eqref{uandfcont} and \eqref{PressureCont}, it is readily checked that a contradiction to \eqref{WhatWeDontWant} is obtained, for all $k$ sufficiently large, by choosing 
$$
c_1(M_0):=2(2C_2(M_0)+C_3), \quad M_0>0.
$$
Moreover, our choice $c_1$ is independent of $\theta$. 
\end{proof}

\section{A local criterion for H\"{o}lder continuity}\label{HC}
With Lemma \ref{DecayLem} in hand, we are now in position iterate its conclusion which is the major step in establishing a local criterion for H\"{o}lder continuity
for weak solutions of the system of PDE \eqref{System1}.  As a corollary of this H\"{o}lder continuity criterion, we prove Theorem \ref{likeSerrin} which is reminiscent of Serrin's higher regularity result for weak solutions of the incompressible Navier-Stokes equation \cite{Serrin}. 
\begin{lem}
Assume $(u,p,F)$ is weak solution on $Q_1$ and fix 
\begin{equation}\label{cthetabeta}
0<\theta<1/2, \quad M>0, \quad 0<\beta <2/3
\end{equation}
so that 
$$
c_1(M)\theta^{1/3-\beta/2}\le 1.
$$
There is $\epsilon_2>0$ such that if 

$$
\begin{cases}
(i)\quad Q_r(x,t)\subset Q_1, \; r\le R_1\\\\
(ii)\quad |u_{Q_r(x,t)}|+|F_{Q_r(x,t)}|<\frac{1}{2}M\\\\
(iii)\quad E(x,t,r)\le \epsilon_2
\end{cases}
$$
then for any $k=0,1,2,3,\dots$

$$
\begin{cases}
(i) \quad |F_{Q_{\theta^kr}(x,t)}|+ |u_{Q_{\theta^kr}(x,t)}|\le M\\\\
(ii) \quad E(x,t,\theta^kr)\le \epsilon_1\\\\
(iii) \quad E(x,t,\theta^{k+1}r)\le \theta^{(k+1)\beta}(1-\theta^{1/3-\beta/2})^{-1}E(x,t,r)
\end{cases}
$$
\end{lem}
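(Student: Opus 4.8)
The plan is to run a single induction on $k$ that establishes conclusions (i), (ii), (iii) together, each inductive step being one invocation of Lemma \ref{DecayLem} at the scale $\theta^k r$; here $R_1$, $\epsilon_1$, and $c_1=c_1(M)$ denote the constants that Lemma \ref{DecayLem} attaches to the parameters $\theta$ and $M$. First I would record the arithmetic consequence of the standing hypothesis $c_1(M)\theta^{1/3-\beta/2}\le 1$. Writing $\lambda:=\theta^{1/3-\beta/2}$, which lies in $(0,1)$ because $\beta<2/3$, and factoring $\theta^{2/3}=\theta^{1/3-\beta/2}\,\theta^{1/3+\beta/2}$, the inequalities $c_1\lambda\le 1$ and $1/3+\beta/2\ge\beta$ (together with $0<\theta<1$) give
\[
c_1(M)\,\theta^{2/3}\ \le\ \theta^{1/3+\beta/2}\ \le\ \theta^{\beta}.
\]
Consequently, whenever Lemma \ref{DecayLem} is applicable at a scale $\rho$ — i.e.\ $Q_\rho(x,t)\subset Q_1$, $\rho\le R_1$, $|u_{Q_\rho}|+|F_{Q_\rho}|\le M$, $E(x,t,\rho)\le\epsilon_1$ — its conclusion may be read as $E(x,t,\theta\rho)\le\theta^{\beta}E(x,t,\rho)$. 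For the base case $k=0$, conclusion (i) is precisely hypothesis (ii) of the present lemma, conclusion (ii) holds provided $\epsilon_2\le\epsilon_1$, and then Lemma \ref{DecayLem} at scale $r$ (legitimate since $Q_r(x,t)\subset Q_1$ and $r\le R_1$) gives $E(x,t,\theta r)\le\theta^\beta E(x,t,r)\le\theta^\beta(1-\lambda)^{-1}E(x,t,r)$, which is (iii).

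For the inductive step, assume (i)--(iii) hold at all indices $\le k-1$. Conclusion (ii) at step $k$ follows from (iii) at step $k-1$, since $E(x,t,\theta^k r)\le\theta^{k\beta}(1-\lambda)^{-1}E(x,t,r)\le(1-\lambda)^{-1}\epsilon_2$, which is $\le\epsilon_1$ as soon as $\epsilon_2\le(1-\lambda)\epsilon_1$. For conclusion (i) at step $k$, I would telescope the mean values: since $Q_{\theta^{j+1}r}\subset Q_{\theta^{j}r}$ and these parabolic cylinders have volumes in ratio $\theta^{5}$, Jensen's inequality on the normalized integral over $Q_{\theta^{j+1}r}$ followed by enlargement of the cube yields, for $u$ and identically for $F$,
\[
\big|u_{Q_{\theta^{j+1}r}}-u_{Q_{\theta^{j}r}}\big|\ \le\ \tfrac{1}{|Q_{\theta^{j+1}r}|}\!\iint_{Q_{\theta^{j+1}r}}\!\!\big|u-u_{Q_{\theta^{j}r}}\big|\ \le\ \theta^{-5/3}\Big(\tfrac{1}{|Q_{\theta^{j}r}|}\!\iint_{Q_{\theta^{j}r}}\!\!\big|u-u_{Q_{\theta^{j}r}}\big|^3\Big)^{1/3}\ \le\ \theta^{-5/3}E(x,t,\theta^j r).
\]
Summing over $j=0,\dots,k-1$ and bounding $\sum_{j=0}^{k-1}E(x,t,\theta^j r)$ by $E(x,t,r)$ plus the convergent geometric series coming from (iii) at steps $0,\dots,k-2$, one obtains $\sum_{j=0}^{k-1}E(x,t,\theta^j r)\le C(\theta,\beta)\,E(x,t,r)\le C(\theta,\beta)\,\epsilon_2$ for an explicit $C(\theta,\beta)$ (e.g.\ $C=1+\theta^\beta[(1-\lambda)(1-\theta^\beta)]^{-1}$), hence
\[
|u_{Q_{\theta^k r}}|+|F_{Q_{\theta^k r}}|\ \le\ |u_{Q_r}|+|F_{Q_r}|+2\theta^{-5/3}C(\theta,\beta)\,\epsilon_2\ <\ \tfrac12 M+2\theta^{-5/3}C(\theta,\beta)\,\epsilon_2\ \le\ M,
\]
as long as $\epsilon_2\le M\theta^{5/3}/(4C(\theta,\beta))$. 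With (i) and (ii) at step $k$ in hand, Lemma \ref{DecayLem} applies at scale $\theta^k r\le R_1$ and delivers $E(x,t,\theta^{k+1}r)\le\theta^\beta E(x,t,\theta^k r)\le\theta^{(k+1)\beta}(1-\lambda)^{-1}E(x,t,r)$, using (iii) at step $k-1$ (or, for $k=0$, just $(1-\lambda)^{-1}\ge 1$). This is (iii) at step $k$, and the induction is complete.

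Collecting the two requirements above, it suffices to fix $\epsilon_2:=\min\big\{(1-\lambda)\epsilon_1,\ M\theta^{5/3}/(4C(\theta,\beta))\big\}$, which depends only on $\theta$, $\beta$, $M$ (through the data of Lemma \ref{DecayLem}). I do not expect a genuine obstacle here: the analytic substance is entirely contained in Lemma \ref{DecayLem}, and the only point demanding attention is organizational — the decay estimate (iii) at smaller scales is exactly what propagates the smallness (ii) and controls the drift of the means in (i), while (i)--(ii) are in turn what re-license Lemma \ref{DecayLem} at the next scale, so one must run the three conclusions through a single simultaneous induction and pin down $\epsilon_2$ once so that it absorbs the whole (finite, geometrically summable) accumulation of mean-value increments.
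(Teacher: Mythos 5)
Your proposal is correct and follows essentially the same route as the paper: a simultaneous induction on $k$ in which (iii) at smaller scales propagates the smallness in (ii), a telescoping of mean values with the $\theta^{-5/3}$ volume-ratio factor and a geometric series controls (i), and $\epsilon_2$ is fixed as a minimum of two quantities absorbing both requirements. The only differences are cosmetic (indexing of the induction, an extra harmless factor of $2$ in the mean-value bound, and a slightly different but equivalent explicit formula for $\epsilon_2$).
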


\begin{proof}
1. First set 
$$
I(x,t,r):=\left(\ffint  |u-u_{Q_r}|^3dyds\right)^{1/3}+\left(\ffint  |F-F_{Q_r}|^3dyds\right)^{1/3}
$$
for $Q_r=Q_r(x,t)$, and notice that 
$$
|u_{Q_r(x,t)}-u_{Q_{\theta r}(x,t)}|+|F_{Q_r(x,t)}-F_{Q_{\theta r}(x,t)}|\le \frac{I(x,t,r)}{\theta^{5/3}}.
$$
The above estimate follows directly from H\"{o}lder's inequality, and using the triangle inequality
$$
|u_{Q_{\theta^kr}(x,t)}|+|F_{Q_{\theta^kr}(x,t)}|  \le \frac{1}{\theta^{5/3}}\sum^{k-1}_{j=0}I(x,t,\theta^j r)+ |u_{Q_r(x,t)}|+|F_{Q_r(x,t)}|.
$$
We define 
$$
\epsilon_2:=(1-\theta^{1/3-\beta/2})\min\left\{\frac{1}{2}\epsilon_1, \theta^{5/3}(1-\theta^\beta)\frac{M}{2}\right\}
$$
as our candidate for $\epsilon_2$ described in the statement of the lemma. We argue by induction below. 

\par 2. We establish the claim for $k=0$.  By assumption and our choice of $\epsilon_2$ 
$$
E(x,t,r)\le \epsilon_2\le (1-\theta^{1/3-\beta/2})\frac{\epsilon_1}{2}\le \frac{\epsilon_1}{2}\le \epsilon_1.
$$
It is also easy to verify that the conditions of the Lemma \ref{DecayLem} are satisfied, and thus

\begin{eqnarray}
E(x,t,\theta r) & \le & c_1\theta^{2/3} E(x,t,r) \nonumber \\
                       & \le & c_1\theta^{1/3-\beta/2}\theta^{1/3+\beta/2}E(x,t,r) \nonumber \\
                       & \le & 1 \cdot \theta^{1/3+\beta/2}E(x,t,r) \nonumber \\
                       &\le & \theta^\beta(1-\theta^{1/3-\beta/2})^{-1}E(x,t,r) \nonumber
\end{eqnarray}
as $\theta^{1/3+\beta/2}\le \theta^\beta(1-\theta^{1/3-\beta/2})^{-1}$ (which happens if and only if $\theta^{1/3-\beta/2}\le 2$).  

\par 3. Assume for $s=0,1,\dots, k$ 

$$
\begin{cases}
(i)_s \quad |u_{Q_{\theta^sr}(x,t)}|+|F_{Q_{\theta^sr}(x,t)}|\le M\\\\
(ii)_s \quad E(x,t,\theta^sr)\le \epsilon_1\\\\
(iii)_s \quad E(x,t,\theta^{s+1}r)\le \theta^{(s+1)\beta}(1-\theta^{1/3-\beta/2})^{-1}E(x,t,r)
\end{cases}
$$
We show that these assumptions imply that the above bounds hold in turn for $s=k+1.$ For simplicity, we suppress the $(x,t)$ dependence in our arguments below.

\par For $(i)_{k+1}$:
\begin{eqnarray}
|u_{r\theta^{k+1}}|+|F_{r\theta^{k+1}}|&\le & \frac{1}{\theta^{5/3}}\sum^{k}_{j=0}I(\theta^j r) +|u_{Q_r}|+|F_{Q_r}|  \nonumber \\ 
                                                        &< &  \frac{1}{\theta^{5/3}}\sum^{k}_{j=0}E(\theta^j r) + \frac{M}{2} \nonumber \\ 
                                                        &\le & \frac{1}{\theta^{5/3}}(1-\theta^{1/3-\beta/2})^{-1}E(r)\sum^k_{j=0}\theta^{\beta j} + \frac{M}{2} \nonumber \\
                                                        &\le & \frac{1}{\theta^{5/3}}\frac{E(r)}{(1-\theta^{1/3-\beta/2}) (1-\theta^\beta)} + \frac{M}{2} \nonumber\\
                                                        &\le & \frac{\epsilon_2}{\theta^{5/3}(1-\theta^{1/3-\beta/2}) (1-\theta^\beta)} +\frac{M}{2} \nonumber \\
                                                        &\le & \frac{M}{2}+\frac{M}{2} \nonumber \\
                                                        &=&M.\nonumber
\end{eqnarray}
For $(ii)_{k+1}$: By assumption $(iii)_k$, we have 

\begin{eqnarray}
E(\theta^{k+1}r)&\le &\theta^{(k+1)\beta}(1-\theta^{1/3-\beta/2})^{-1}E(r) \nonumber \\
			&\le &\theta^{(k+1)\beta}(1-\theta^{1/3-\beta/2})^{-1}\epsilon_2 \nonumber \\
			&\le &\theta^{(k+1)\beta}\frac{\epsilon_1}{2},\nonumber \\
			&< &\epsilon_1. \nonumber
\end{eqnarray}
For $(iii)_{k+1}$: Observe that the hypotheses of the Lemma \eqref{DecayLem} are satisfied with $r\mapsto \theta^kr$.   Moreover, using \eqref{cthetabeta}
\begin{eqnarray}
E(\theta^{k+2}r) & = & E(\theta\theta^{k+1}r) \nonumber \\
			     & \le & c_1\theta^{2/3} E(\theta^{k+1}r) \nonumber \\
			     &\le & c_1 \theta^{2/3} \theta^{(k+1)\beta}(1-\theta^{1/3-\beta/2})^{-1}E(r)\nonumber\\
			     &= & c_1\theta^{1/3-\beta/2}\theta^{1/3+\beta/2}\theta^{(k+1)\beta}(1-\theta^{1/3-\beta/2})^{-1}E(r)\nonumber\\
			     &\le & 1\cdot \theta^{1/3+\beta/2}\theta^{(k+1)\beta}(1-\theta^{1/3-\beta/2})^{-1}E(r)\nonumber\\
			     &\le & \theta^{(k+2)\beta}(1-\theta^{1/3-\beta/2})^{-1}E(r)\nonumber.
\end{eqnarray}
The last inequality follows since $\beta/2 +1/3\ge \beta$ and $\theta\in (0,1)$ which trivially implies $\theta^{1/3+\beta/2}\le \theta^\beta.$

\end{proof}

The main use of the decay and iteration lemmas is the following proposition.  We omit the proof as it follows from the above lemma and relatively standard manipulations. See Proposition 2.8 in \cite{LS} for a related result, concerning the incompressible Navier-Stokes, and its proof, which is easily adapted to our framework.  

\begin{prop}\label{FirstHolder}
Assume that $(u,p,F)$ is a weak solution of \eqref{System1}. There are universal constants $\epsilon_3, R_2$ such that if 
$$
\begin{cases}
Q_r=Q_r(x,t)\subset Q_1, \quad r\le R_2\\
\overline{E}(x,t,r)<\epsilon_3
\end{cases}
$$
then $u$ and $F$ are H\"{o}lder continuous in some neighborhood of $(x,t)$. Here 
$$
\overline{E}(x,t,r):=\left(\ffint|u|^3\right)^{1/3} + r\left(\ffint|p|^{3/2}\right)^{2/3}+ \left(\ffint|F|^3\right)^{1/3}.
$$
\end{prop}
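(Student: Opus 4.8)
The plan is to extract, from the single hypothesis $\overline{E}(x,t,r)<\epsilon_3$, the hypotheses of the preceding iteration lemma --- not only at $(x,t)$, but, after replacing the radius by $r/2$, at \emph{every} center in a fixed parabolic neighborhood of $(x,t)$ --- and then to convert the resulting geometric decay of the centered excess $E$ into a parabolic Campanato estimate for $u$ and $F$. To set up, I would fix once and for all $M:=1$ and $\beta:=1/2$ and choose $\theta\in(0,1/2)$ small enough that $c_1(M)\theta^{1/3-\beta/2}\le1$; this pins down $\epsilon_1,R_1$ from Lemma~\ref{DecayLem} and the threshold $\epsilon_2=\epsilon_2(M,\theta,\beta)$ from the iteration lemma. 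For any parabolic cylinder $Q_\rho(y,s)\subset Q_1$, H\"older's and Jensen's inequalities give $|u_{Q_\rho(y,s)}|+|F_{Q_\rho(y,s)}|\le\overline{E}(y,s,\rho)$ and $E(y,s,\rho)\le C_0\,\overline{E}(y,s,\rho)$ for a universal $C_0$ (write $u=(u-u_{Q_\rho})+u_{Q_\rho}$, likewise for $F$, and dominate the spatial mean $p_{B_\rho}(s)$ by the $L^{3/2}$ mean of $p$); moreover, if $Q_{\rho/2}(y,s)\subset Q_\rho(x,t)$, comparing averages over nested cylinders yields $\overline{E}(y,s,\rho/2)\le C_1\,\overline{E}(x,t,\rho)$ for a universal $C_1$. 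I then set $R_2:=R_1$ and let $\epsilon_3$ be a small universal constant with $C_0C_1\epsilon_3\le\epsilon_2$ and $C_1\epsilon_3\le M/2$.

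Now assume $Q_r(x,t)\subset Q_1$, $r\le R_2$, $\overline{E}(x,t,r)<\epsilon_3$, and put $r_0:=r/2\le R_1$. Choose an open parabolic neighborhood $U$ of $(x,t)$ --- say $U=B_{r/2}(x)\times(t-3r^2/8,\,t+3r^2/8)$ --- so that $Q_{r_0}(y,s)\subset Q_r(x,t)$ for every $(y,s)\in U$. By the previous paragraph, for each $(y,s)\in U$ we have $|u_{Q_{r_0}(y,s)}|+|F_{Q_{r_0}(y,s)}|<M/2$ and $E(y,s,r_0)\le C_0C_1\overline{E}(x,t,r)<\epsilon_2$, so the iteration lemma applies at $(y,s)$ with radius $r_0$, giving, with $C:=(1-\theta^{1/3-\beta/2})^{-1}$,
$$
E(y,s,\theta^{k}r_0)\le C\,\theta^{k\beta}\,\epsilon_2\qquad\text{for all }(y,s)\in U\text{ and }k\ge0 .
$$

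To pass to arbitrary radii I would fix $(y,s)\in U$ and $0<\rho\le r_0$, say $\theta^{k+1}r_0<\rho\le\theta^{k}r_0$, and compare the means over $Q_\rho(y,s)$ with those over $Q_{\theta^{k}r_0}(y,s)$: the $u$- and $F$-parts pick up at most $|Q_{\theta^{k}r_0}|/|Q_\rho|\le\theta^{-5}$, while for the pressure part one uses in addition $|p_{B_\rho}(\sigma)-p_{B_{\theta^{k}r_0}}(\sigma)|\le C\big(|B_\rho|^{-1}\int_{B_\rho}|p-p_{B_{\theta^{k}r_0}}(\sigma)|^{3/2}\,dy\big)^{2/3}$. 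This produces a universal $C_\theta$ with $E(y,s,\rho)\le C_\theta(\rho/r_0)^{\beta}\epsilon_2$ for all $0<\rho\le r_0$ and all $(y,s)\in U$ --- i.e. uniform parabolic Campanato estimates of order $\beta$ (in the $L^3$ mean-oscillation sense) for $u$ and $F$ on $U$. Finally, telescoping $|u_{Q_{\theta^{k}r_0}(y,s)}-u_{Q_{\theta^{k+1}r_0}(y,s)}|\le\theta^{-5/3}E(y,s,\theta^{k}r_0)\le C\theta^{k\beta}$ shows that $u_{Q_\rho(y,s)}$ converges as $\rho\downarrow0$ to some $u^{*}(y,s)$ with $|u_{Q_\rho(y,s)}-u^{*}(y,s)|\le C\rho^{\beta}$, and $u^{*}=u$ at every Lebesgue point, hence a.e. on $U$; for $(y,s),(y',s')\in U$ at small parabolic distance $d$, taking $\rho\simeq d$ and comparing $u_{Q_\rho(y,s)}$ with $u_{Q_{\rho/2}(y',s')}$ (noting $Q_{\rho/2}(y',s')\subset Q_\rho(y,s)$) gives $|u^{*}(y,s)-u^{*}(y',s')|\le Cd^{\beta}$. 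Thus $u$ has a representative H\"older continuous with respect to the parabolic metric on $U$ (hence H\"older continuous in the usual sense), and the identical argument applies to $F$; this proves the proposition with $\epsilon_3,R_2$ as fixed above.

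All the genuine PDE input is already carried by Lemma~\ref{DecayLem} and by the iteration lemma, so the remaining work is purely quantitative. The two points that need care are: (a) checking that the three hypotheses of the iteration lemma survive the passage to the whole neighborhood $U$ with one common radius $r_0$ and with constants independent of the center --- this is where the nesting inequality $\overline{E}(y,s,r_0)\le C_1\overline{E}(x,t,r)$ and the chained choice of $\epsilon_3$ enter; and (b) the recentering of the spatial mean $p_{B_\rho}(\sigma)$ when interpolating the pressure part of $E$ between consecutive geometric radii. I expect (b) to be the fussiest step, since, unlike the $u$- and $F$-terms, the pressure excess is not simply monotone under shrinking cylinders and must be handled through the triangle inequality for spatial averages.
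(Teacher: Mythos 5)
Your proposal is correct and is exactly the ``relatively standard manipulation'' the paper alludes to when it omits this proof and points to Proposition 2.8 of Ladyzhenskaya--Seregin: verify the hypotheses of the iteration lemma at every center of a fixed parabolic neighborhood of $(x,t)$ with the common starting radius $r/2$ (via the Jensen/H\"older bounds $|u_{Q_\rho}|+|F_{Q_\rho}|\le\overline{E}$, $E\le C_0\overline{E}$ and the nesting inequality), then turn the resulting geometric decay of $E$ into a uniform parabolic Campanato estimate and hence H\"older continuity of $u$ and $F$. I see no gaps; your attention to the recentering of the pressure average and to the uniformity of the constants in the center $(y,s)$ covers the only delicate points.
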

The scaling invariance properties of the system \eqref{System1} imply the following improvement of
Proposition \eqref{FirstHolder}.\footnote{Recall \eqref{ScaleInv}.}  This simple observation is as important as any we make in this work. 

\begin{cor}
Assume $(u,p,F)$ is a weak solution on $Q_1$ and let $\epsilon_3, R_2$ be as in Proposition \ref{FirstHolder}. Further suppose that 
$$
\begin{cases}
Q_r(x,t)\subset Q_1, \quad r\le R_2\\
\frac{r}{R_2} \overline{E}(x,t,r)<\epsilon_3
\end{cases}.
$$
Then $(x,t)$ is a regular point for $(u,p,F).$
\end{cor}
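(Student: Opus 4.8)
The plan is to deduce the Corollary from Proposition \ref{FirstHolder} by a single application of the scaling \eqref{ScaleInv}. The key point is that the quantity $r\,\overline{E}(x,t,r)$ is scale invariant, so the factor $r/R_2$ in the hypothesis is exactly what converts the bound at radius $r$ into a bound at radius $R_2$ for a suitably rescaled solution on $Q_1$.

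First I would record the transformation law for $\overline{E}$. If $(u^\lambda,p^\lambda,F^\lambda)$ denotes the rescaling of $(u,p,F)$ about a centre $(x_0,t_0)$ with parameter $\lambda>0$ as in \eqref{ScaleInv}, then a change of variables in the averages defining $\overline{E}$ yields
\[
\overline{E}^{\,(u^\lambda,p^\lambda,F^\lambda)}\!\left(\tfrac{x-x_0}{\lambda},\,\tfrac{t-t_0}{\lambda^2},\,\rho\right)=\lambda\,\overline{E}(x,t,\lambda\rho),
\]
the factor $\lambda^2$ coming from the pressure term being absorbed into the relabelling $\rho\mapsto\lambda\rho$. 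With $\lambda:=r/R_2$ and $\rho:=R_2$ the right-hand side equals $\tfrac{r}{R_2}\,\overline{E}(x,t,r)$, which is $<\epsilon_3$ by hypothesis.

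Next I would produce a good centre. Since $r\le R_2$ we have $\lambda=r/R_2\le 1$, and an elementary argument --- if $(x,t)$ lies too close to $\partial Q_1$, translate its centre toward the origin --- produces a point $(x_0,t_0)$ with
\[
Q_r(x,t)\subseteq Q_{\lambda}(x_0,t_0)\subseteq Q_1(0,0).
\]
Rescaling $(u,p,F)$ about $(x_0,t_0)$ with parameter $\lambda$ gives, via \eqref{ScaleInv}, a triple $(v,q,G):=(u^\lambda,p^\lambda,F^\lambda)$; since $Q_{\lambda}(x_0,t_0)\subseteq Q_1(0,0)$, this triple is defined on (a parabolic cylinder containing) $Q_1(0,0)$ and is a weak solution on $Q_1$ in the sense of Definition \ref{SuitWeakSoln}. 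The image $(x',t'):=\bigl(\tfrac{x-x_0}{\lambda},\tfrac{t-t_0}{\lambda^2}\bigr)$ of $(x,t)$ satisfies $Q_{R_2}(x',t')\subseteq Q_1(0,0)$ because $Q_r(x,t)\subseteq Q_{\lambda}(x_0,t_0)$, and by the transformation law above $\overline{E}^{\,(v,q,G)}(x',t',R_2)=\tfrac{r}{R_2}\,\overline{E}(x,t,r)<\epsilon_3$.

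Finally, Proposition \ref{FirstHolder} applied to $(v,q,G)$ at the point $(x',t')$ with radius $R_2\le R_2$ shows that $v$ and $G$ are H\"older continuous in a neighbourhood of $(x',t')$; since the map underlying \eqref{ScaleInv} is an affine bijection and multiplication by the positive constant $\lambda$ preserves H\"older continuity, $u$ and $F$ are H\"older continuous near $(x,t)$, that is, $(x,t)\notin S$. I do not expect a genuine obstacle here: the only content is the scaling bookkeeping for $\overline{E}$ together with the trivial fact that $Q_{r/R_2}$ still fits inside $Q_1$ once $r\le R_2$, the latter requiring only mild care uniform in the position of $(x,t)$ relative to $\partial Q_1$. (Equivalently, one may simply observe that Proposition \ref{FirstHolder} is scale invariant and invoke the scale-invariant version directly.)
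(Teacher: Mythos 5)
Your proof is correct and follows essentially the same route as the paper: rescale by $\lambda=r/R_2$ via \eqref{ScaleInv}, use the scale invariance $\overline{E}(0,0,R_2;u^\lambda,p^\lambda,F^\lambda)=\lambda\,\overline{E}(x,t,r;u,p,F)=\tfrac{r}{R_2}\overline{E}(x,t,r)<\epsilon_3$, and invoke Proposition \ref{FirstHolder}. The only difference is your extra care in translating the centre so the rescaled triple is literally a weak solution on all of $Q_1$; the paper simply rescales about $(x,t)$ and applies the (local) proposition to the solution on $Q_{R_2}(0,0)$, so your version is, if anything, slightly more scrupulous.
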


\begin{proof}
Set $\lambda=r/R_2$ and define
$$
\begin{cases}
u^\lambda(y,s):=\lambda u(x+\lambda y,t+\lambda^2 s)\\
u^\lambda(y,s):=\lambda^2 p(x+\lambda y,t+\lambda^2 s)\\
F^\lambda(y,s):=\lambda F(x+\lambda y,t+\lambda^2 s)
\end{cases}, (y,s)\in Q_{R_2}(0,0).
$$
It is immediate that $(u^\lambda,p^\lambda,F^\lambda)$ is a weak solution on $Q_{R_2}(0,0)$, and direct computation yields
\begin{eqnarray}
\overline{E}(0,0,R_2; u^\lambda,p^\lambda,F^\lambda)&=&\lambda \overline{E}(t,x,r; u,p,F)\nonumber\\
&=& \frac{r}{R_2} \overline{E}(t,x,r; u,p,F) \nonumber\\
&<&\epsilon_3. \nonumber
\end{eqnarray}
By  Proposition \ref{FirstHolder}, $u^\lambda$ and $F^\lambda$ are H\"{o}lder continuous in a neighborhood of $(0,0)$. Consequently, $u$ and $F$ are H\"{o}lder continuous in a neighborhood of 
$(x,t)$.
\end{proof}

\begin{cor}\label{PfivethirdsEst}
Assume $(u,p,F)$ is a weak solution on $Q_1$ and let $\epsilon_3, R_2$ be as in Proposition \ref{FirstHolder}. If
$$
\liminf_{r\rightarrow 0^+} r\overline{E}(x,t,r)< \frac{1}{2}\epsilon_3 R_2
$$
then $(x,t)$ is a regular point point for $(u,p,F)$.   In particular, there is $\epsilon_4>0$ such that the same conclusion holds provided
\begin{equation}\label{FirstCKNLimit}
\liminf_{r\rightarrow 0^+}\frac{1}{r^2}\iint_{Q_r{(x,t)}}\left\{ |u|^3+|p|^{3/2}+|F|^3\right\}dyds<\epsilon_4.
\end{equation}
\end{cor}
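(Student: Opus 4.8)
The plan is to deduce Corollary \ref{PfivethirdsEst} directly from the preceding corollary by a rescaling argument, together with a routine translation of the hypothesis into the integral form \eqref{FirstCKNLimit}. First I would observe that the quantity $r\overline{E}(x,t,r)$ behaves well under the parabolic scaling \eqref{ScaleInv}: if we set $\lambda = r/R_2$ and define $(u^\lambda,p^\lambda,F^\lambda)$ as in the proof of the previous corollary, then a direct computation shows
$$
R_2\,\overline{E}(0,0,R_2;u^\lambda,p^\lambda,F^\lambda) = \frac{r^2}{R_2}\,\overline{E}(x,t,r;u,p,F) \cdot \frac{R_2}{r} \cdot \frac{1}{R_2}\cdot R_2,
$$
so more cleanly $r\,\overline{E}(x,t,r)$ scales like $\lambda^2 \cdot (R_2/\lambda)$; in any case the point is that the dimensionless combination governing regularity at $(x,t)$ is exactly $r\,\overline{E}(x,t,r)$, up to the fixed factor $R_2$.

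The key steps, in order, would be: (1) Suppose $\liminf_{r\to 0^+} r\,\overline{E}(x,t,r) < \tfrac12 \epsilon_3 R_2$. Then there is a sequence $r_j \to 0^+$ with $r_j \le R_2$ and $r_j\,\overline{E}(x,t,r_j) < \tfrac12 \epsilon_3 R_2$ for all $j$. (2) For each such $r_j$, rewrite the hypothesis of the previous corollary: we need $\tfrac{r_j}{R_2}\overline{E}(x,t,r_j) < \epsilon_3$, which follows immediately since $\tfrac{r_j}{R_2}\overline{E}(x,t,r_j) = \tfrac{1}{R_2}\bigl(r_j\overline{E}(x,t,r_j)\bigr) < \tfrac{1}{R_2}\cdot \tfrac12\epsilon_3 R_2 = \tfrac12 \epsilon_3 < \epsilon_3$. (3) Apply the previous corollary with $r = r_j$ (any single $j$ suffices) to conclude that $(x,t)$ is a regular point. (4) For the second assertion, unwind the definition of $\overline{E}$: by Hölder's inequality,
$$
r\,\overline{E}(x,t,r) \le C\, r\left[\left(\ffint|u|^3\right)^{1/3} + r\left(\ffint|p|^{3/2}\right)^{2/3} + \left(\ffint|F|^3\right)^{1/3}\right],
$$
and since $|Q_r| = c\, r^5$, each term of the form $r\bigl(\ffint |v|^q\bigr)^{1/q}$ is bounded by $C\, r \cdot r^{-5/q}\bigl(\iint_{Q_r}|v|^q\bigr)^{1/q}$; choosing $q=3$ for $u,F$ gives $r^{1-5/3} = r^{-2/3}$, so the cube of that term is $\le C r^{-2}\iint_{Q_r}|v|^3$, and similarly the pressure term with $q=3/2$ contributes (after the extra factor of $r$) $r^{2}\cdot r^{-10/3}\bigl(\iint|p|^{3/2}\bigr)^{2/3}$, whose $3/2$-power is $\le C r^{-2}\iint_{Q_r}|p|^{3/2}$. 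Combining, $\bigl(r\overline{E}(x,t,r)\bigr)^{3/2}$ (or an appropriate power) is controlled by $C r^{-2}\iint_{Q_r}\{|u|^3+|p|^{3/2}+|F|^3\}$, so a small enough $\epsilon_4$ forces $\liminf r\,\overline{E} < \tfrac12 \epsilon_3 R_2$.

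I do not expect any real obstacle here — this is a bookkeeping corollary. The only mildly delicate point is getting the powers of $r$ consistent in step (4): one must be careful that the pressure term in $\overline{E}$ already carries a prefactor $r$, and that $|Q_r|^{1/q}$ contributes $r^{5/q}$, so that when everything is raised to a common power the net exponent of $r$ is exactly $-2$, matching the $1/r^2$ in \eqref{FirstCKNLimit}. Once that is checked, the value of $\epsilon_4$ can be read off explicitly in terms of $\epsilon_3$, $R_2$, and the universal Hölder/geometric constants, and the proof is complete.
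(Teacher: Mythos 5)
Your proposal is correct and follows essentially the same route as the paper: extract a single small radius $r<R_2$ with $Q_r(x,t)\subset Q_1$ from the liminf hypothesis, verify $\tfrac{r}{R_2}\overline{E}(x,t,r)<\epsilon_3$, and invoke the preceding rescaling corollary, with the second assertion reduced to the first by the power counting $|Q_r|\sim r^5$ (which the paper leaves implicit but you carry out correctly). The garbled scaling display at the start of your write-up is unnecessary and should be deleted, but it plays no role in the actual argument.
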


\begin{proof}
If 
$$
\liminf_{r\rightarrow 0^+} r\overline{E}(x,t,r)< \frac{1}{2}\epsilon_3 R_2,
$$
then $\inf_{0<r< \delta} r\overline{E}(x,t,r)< \frac{1}{2}\epsilon_3 R_2$ for any $\delta>0$.  Moreover, there is $r<R_2$ such that $\frac{r}{R_2}E(x,t,r)<\epsilon_3$ and $Q_r(x,t)\subset Q_1.$ The previous corollary then applies.  The second assertion follows immediately from the first. 
\end{proof}
The following claim is in the spirit of Serrin's regularity criterion for weak solutions of the incompressible Navier-Stokes equations. Note, however, that our result requires integrability of the pressure as well as integrability of the velocity and the deformation. 
\begin{thm}\label{likeSerrin}
Assume that $(u,p,F)$ is a weak solution on $Q_1$ such that
\begin{equation}\label{FirstCKNBound}
\int^{1/2}_{-1/2}\left(\int_{B_1}\left\{|u|^s +|p|^{s/2}+|F|^{s}\right\}dx\right)^{s'/s}dt<\infty.
\end{equation}
for some  $ s'\ge s$ satisfying
$$
\frac{3}{s}+\frac{2}{s'}\le 1
$$
Then $u,F$ is regular on $Q_1$.
\end{thm}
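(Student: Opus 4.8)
\emph{Proof strategy.}\quad The plan is to reduce the statement to Corollary~\ref{PfivethirdsEst}. Fix a point $(x,t)$ for which $Q_r(x,t)\subset Q_1$ for all sufficiently small $r>0$; it is enough to show that
$$
\frac{1}{r^2}\iint_{Q_r(x,t)}\left\{|u|^3+|p|^{3/2}+|F|^3\right\}dy\,d\tau\;\longrightarrow\;0\qquad\text{as }r\to 0^{+},
$$
since then the $\liminf$ in \eqref{FirstCKNLimit} at $(x,t)$ equals $0<\epsilon_4$, so $(x,t)$ is a regular point; as $(x,t)$ is arbitrary this gives regularity of $u,F$ on $Q_1$. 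Observe first that the hypothesis $\tfrac{3}{s}+\tfrac{2}{s'}\le 1$ already forces $s\ge 3$ (if $s<3$ then $\tfrac{3}{s}>1$, a contradiction), hence $s'\ge s\ge 3$ as well, so all Hölder exponents appearing below are admissible.

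The core is a two-step application of Hölder's inequality -- first in space, then in time -- carried out separately on each of the three terms. Take the velocity term and write $B_r=B_r(x)$, $I_r=(t-r^2/2,\,t+r^2/2)$. Hölder in the space variable (exponents $\tfrac{s}{3}$ and its conjugate), at each fixed time $\tau$, gives
$$
\int_{B_r}|u|^3\,dy\;\le\;\Big(\int_{B_r}|u|^{s}\,dy\Big)^{3/s}|B_r|^{1-3/s}\;=\;c\,r^{\,3-9/s}\Big(\int_{B_r}|u|^{s}\,dy\Big)^{3/s},
$$
and Hölder in time (exponents $\tfrac{s'}{3}$ and its conjugate, or the trivial $L^1$--$L^\infty$ bound when $s'=\infty$) gives
$$
\int_{I_r}\Big(\int_{B_r}|u|^{s}\,dy\Big)^{3/s}d\tau\;\le\;\Big(\int_{I_r}\Big(\int_{B_1}|u|^{s}\,dy\Big)^{s'/s}d\tau\Big)^{3/s'}|I_r|^{1-3/s'}.
$$
Since $|B_r|\le c\,r^{3}$ and $|I_r|=r^{2}$, combining the two displays yields
$$
\frac{1}{r^2}\iint_{Q_r(x,t)}|u|^3\,dy\,d\tau\;\le\;C\,r^{\,3\left(1-\frac{3}{s}-\frac{2}{s'}\right)}\Big(\int_{I_r}\Big(\int_{B_1}|u|^{s}\,dy\Big)^{s'/s}d\tau\Big)^{3/s'}.
$$
The deformation term $|F|^3$ satisfies the identical bound; the pressure term is handled by the same computation with $|p|^{3/2}$ in place of $|u|^3$ and with $\tfrac{s}{2},\tfrac{s'}{2}$ in place of $s,s'$, using $\big(|p|^{3/2}\big)^{s/3}=|p|^{s/2}$ and $\big(\int_{B_r}|p|^{s/2}\big)^{(s'/2)/(s/2)}=\big(\int_{B_r}|p|^{s/2}\big)^{s'/s}$; this produces the same power of $r$ and a time integrand that is in $L^1(-1/2,1/2)$ by \eqref{FirstCKNBound}.

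Finally one lets $r\to 0^{+}$. By the scaling hypothesis $\tfrac{3}{s}+\tfrac{2}{s'}\le 1$ the exponent $3\big(1-\tfrac{3}{s}-\tfrac{2}{s'}\big)$ is nonnegative, so $r^{3(1-3/s-2/s')}\le 1$ for $r\le 1$. If $s'<\infty$, the remaining factor tends to $0$ because $\tau\mapsto\big(\int_{B_1}|u(\cdot,\tau)|^{s}\,dy\big)^{s'/s}$ lies in $L^1(-1/2,1/2)$ by \eqref{FirstCKNBound} while $|I_r|\to 0$ (absolute continuity of the Lebesgue integral); the same applies to $|p|^{s/2}$ and $|F|^{s}$. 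If $s'=\infty$ -- admissible precisely when $s\ge 3$ -- the time integral is controlled trivially, and then the space factor $r^{3-9/s}$ tends to $0$ as long as $s>3$. Hence in every admissible configuration, save only the critical endpoint $s=3$ (which, since $s'\ge s$, forces $s'=\infty$), the right-hand side tends to $0$, and the reduction is complete.

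I do not anticipate a real obstacle here: the argument is exponent bookkeeping together with absolute continuity of the integral, and the condition $\tfrac{3}{s}+\tfrac{2}{s'}\le 1$ is exactly what renders the relevant power of $r$ nonnegative. The only point that would demand a separate and genuinely delicate treatment is the critical endpoint $u,F\in L^{\infty}_{t}L^{3}_{x}$, $p\in L^{\infty}_{t}L^{3/2}_{x}$, the analogue of the Ladyzhenskaya--Prodi--Serrin endpoint for the incompressible Navier--Stokes equations; if the statement is read with $s,s'$ finite this case does not arise and the proof above is complete as written.
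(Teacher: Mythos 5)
Your proposal is correct and follows exactly the paper's route: reduce to Corollary~\ref{PfivethirdsEst} by showing that $\frac{1}{r^2}\iint_{Q_r(x,t)}\{|u|^3+|p|^{3/2}+|F|^3\}\to 0$, a step the paper merely asserts and for which you supply the (correct) space--time H\"older bookkeeping. Your observation about the critical endpoint $s=3$, $s'=\infty$ is a fair caveat, but for the statement as written with finite exponents the hypotheses force $s>3$ and your argument is complete.
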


\begin{proof}
For each point $(x,t)\in Q_1$, the bound \eqref{FirstCKNBound} implies 
$$
\lim_{r\rightarrow 0}\frac{1}{r^2}\iint_{Q_r{(x,t)}}\left\{ |u|^3+|p|^{3/2}+|F|^3\right\}dyds=0,
$$
which in turn implies \eqref{FirstCKNLimit}. Therefore, the claim follows from the previous corollary. 
\end{proof}

\section{``A-B-C-D" estimates}\label{ABCDest}
In the previous section, we deduced that there is an $\epsilon_5>0$ such that if
\begin{equation}\label{KeyLimit}
\liminf_{r\rightarrow 0^+} r\overline{E}(x,t,r)< \epsilon_5,
\end{equation}
then $u$ and $F$ are both H\"{o}lder continuous near $(x,t)$.\footnote{$\epsilon_5:=\frac{1}{2}\epsilon_3 R_2$}  We claim that the central hypothesis of Theorem \ref{mainthm} implies the above limit. More precisely, we
assert the following fundamental proposition.
\begin{prop}\label{FundProp} Let $(u,p,F)$ be a weak solution on $Q_1$.  There is a universal constant $\epsilon>0$ such that inequality \eqref{KeyLimit} holds whenever 
$$
\limsup_{r\rightarrow 0^+}\frac{1}{r}\iint_{Q_r(x,t)}\left\{|\nabla u|^2 + |\nabla F|^2\right\}dyds<\epsilon.
$$
\end{prop}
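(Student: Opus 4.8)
The plan is to follow the classical Caffarelli--Kohn--Nirenberg ``$A$-$B$-$C$-$D$'' argument \cite{CKN}, in the streamlined form of \cite{LS}, adapted to the coupled system \eqref{System1Column}. Fix the centre $(x,t)$, abbreviate $Q_\rho=Q_\rho(x,t)$, $B_\rho=B_\rho(x)$, and introduce the scale-invariant quantities
\[
A(\rho)=\sup_{|s-t|<\rho^2/2}\frac1\rho\int_{B_\rho}(|u|^2+|F|^2),\qquad B(\rho)=\frac1\rho\iint_{Q_\rho}(|\nabla u|^2+|\nabla F|^2),
\]
\[
C(\rho)=\frac1{\rho^2}\iint_{Q_\rho}(|u|^3+|F|^3),\qquad D(\rho)=\frac1{\rho^2}\iint_{Q_\rho}|p|^{3/2}.
\]
Each is invariant under the scaling \eqref{ScaleInv}, and a direct computation gives $r\,\overline E(x,t,r)\le c(C(r)^{1/3}+D(r)^{2/3})$ for a universal $c$. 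Hence, by Corollary \ref{PfivethirdsEst}, it is enough to show that for every $\eta>0$ there is $\epsilon=\epsilon(\eta)>0$ with the property that $\limsup_{r\to0^+}B(r)<\epsilon$ forces $\liminf_{r\to0^+}(C(r)+D(r))<\eta$; choosing $\eta$ universal and small enough that $c(\eta^{1/3}+\eta^{2/3})<\epsilon_5$ then yields \eqref{KeyLimit}, which is exactly the assertion of the proposition.

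The first step is to derive three recursive ``$A$-$B$-$C$-$D$'' estimates, valid whenever $Q_r(x,t)\subset Q_1$ and $0<\theta\le\tfrac14$, each with a universal constant $c$. Writing $u=(u-\overline u)+\overline u$ with $\overline u(s):=|B_r|^{-1}\int_{B_r}u(y,s)\,dy$ (and likewise for $F$), applying the Sobolev--Poincar\'e inequality $\|v\|_{L^6(B_r)}\le c\|\nabla v\|_{L^2(B_r)}$ to the mean-free parts, interpolating $L^3$ between $L^2$ and $L^6$, and using H\"older in time, one obtains the Gagliardo--Nirenberg-type bound
\[
C(\theta r)\le c\,(\theta^{3}A(r)^{3/2}+\theta^{-2}A(r)^{3/4}B(r)^{3/4}).
\]
Taking the divergence of the first equation of \eqref{System1Column} and using $\nabla\cdot u=\nabla\cdot F_j=0$ gives $-\Delta p=\partial_i\partial_j(u_iu_j-\sum_{k=1}^3 F_{ki}F_{kj})$; splitting $p=p_1+p_2$ on $B_{r/2}$ with $p_1$ the Newtonian potential of a localisation of the right-hand side and $p_2$ harmonic, and estimating $p_1$ by Calder\'on--Zygmund (as in the derivation of \eqref{pfivethirdsEst} and in part 4 of the proof of Lemma \ref{DecayLem}) and $p_2$ by the interior estimate \eqref{CriticalHarmonicEst}, yields the pressure estimate
\[
D(\theta r)\le c\,(\theta D(r)+\theta^{-2}C(r)).
\]
Finally, inserting into the local energy inequality \eqref{NSEenergy} a cutoff that equals $1$ on $Q_{\theta r}$ and is supported in $Q_r$, and bounding the convective, stretching and pressure terms by H\"older, one gets the energy estimate
\[
A(\theta r)+B(\theta r)\le c\,\theta^{-1}(C(r)+C(r)^{2/3}+C(r)^{1/3}D(r)^{2/3}),
\]
the term $C(r)^{2/3}$ being the only non-homogeneous one and arising from the lower-order contribution $(\phi_t+\Delta\phi)(|u|^2+|F|^2)$.

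The second step is the iteration. Since $(u,p,F)$ is a weak solution on $Q_1$, Definition \ref{SuitWeakSoln} together with the a priori estimates of Section \ref{WS} (in particular \eqref{pfivethirdsEst} and the energy inequality, used on a fixed subcylinder) give a radius $\rho_0\in(0,1)$ with $A(\rho_0)+B(\rho_0)+C(\rho_0)+D(\rho_0)=:K<\infty$. One first fixes $\theta\in(0,\tfrac14)$ small, depending only on the universal constants in the three estimates above, and then $\epsilon>0$ small relative to $\theta$; the hypothesis furnishes $r_0$ with $B(r)<\epsilon$ for all $r\le r_0$. Setting $r_\ast=\min(\rho_0,r_0)$ and chaining the three estimates along the dyadic radii $\theta^k r_\ast$, one shows by induction that $A(\theta^k r_\ast)+C(\theta^k r_\ast)+D(\theta^k r_\ast)$ remains bounded for all $k$ --- here the $\theta^3$-gain in the $C$-estimate together with the smallness of $B$ is exactly what neutralises the $\theta^{-1}$ in the energy estimate and the $\theta^{-2}$ in the pressure estimate, so the geometric blow-up that would otherwise occur is prevented. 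Once this boundedness is in hand, the $C$-estimate forces $C(\theta^k r_\ast)$ below any prescribed level for all large $k$ (shrink $\theta$, then $\epsilon$), the pressure estimate then propagates $D(\theta^k r_\ast)$ below the same level, and one concludes $\liminf_{r\to0^+}(C(r)+D(r))<\eta$, which by the reduction of the first paragraph completes the proof.

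I expect the coupled iteration in the last paragraph to be the main obstacle. Because the pressure and energy estimates each lose a power of $\theta$, no single one of the three inequalities is a contraction, so the argument has to be run in two phases --- first uniform boundedness over the dyadic scales, which genuinely uses the smallness of $B$ to tame the term $A(r)^{3/2}$ in the $C$-estimate, and only then smallness --- and some care is needed to ensure that the final universal threshold does not secretly depend on the solution-dependent constant $K$. A secondary technical point is that $p$ is determined by the equations only up to an additive function of $t$, which is why the harmonic part $p_2$ must be carried through the pressure estimate rather than discarded.
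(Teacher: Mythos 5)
Your overall strategy is the paper's: the same scale--invariant quantities $A,B,C,D$, three recursive estimates (an interpolation bound for $C$, an energy bound for $A+B$, a harmonic/Calder\'on--Zygmund decomposition for $D$), followed by an iteration over dyadic radii and the reduction $r\overline E\le c(C^{1/3}+D^{2/3})$ feeding into Corollary \ref{PfivethirdsEst}. The three lemmas you state are each provable (your cruder variants, bounding the convective term and the Calder\'on--Zygmund part of the pressure by $C(r)$ rather than by $A^{1/2}B^{1/2}C^{1/3}$ and $A^{3/4}B^{3/4}$, are legitimate estimates). The gap is in the step you yourself flag as the main obstacle: the iteration, as you have set it up, does not close. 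Writing $a_k=A(\theta^k r_*)$, etc., your three inequalities give $c_{k+1}\le c\,\theta^3a_k^{3/2}+c\,\theta^{-2}a_k^{3/4}\epsilon^{3/4}$, $d_{k+1}\le c\,\theta d_k+c\,\theta^{-2}c_k$, and $a_{k+1}\le c\,\theta^{-2}(c_k+c_k^{2/3}+c_k^{1/3}d_k^{2/3})$. An induction hypothesis $a_k,c_k,d_k\le K$ fails immediately, since the energy estimate returns $a_{k+1}\le c\,\theta^{-2}\cdot 3K\gg K$; and even ignoring that, taming the superlinear term $\theta^3 a_k^{3/2}$ requires $\theta^3K^{1/2}\lesssim 1$, i.e.\ $\theta$ --- and hence the final $\epsilon$ --- would depend on the solution-dependent constant $K$, destroying universality. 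The smallness of $B$ enters your scheme only through the single term $\theta^{-2}A^{3/4}B^{3/4}$, which is not where the danger lies.

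The paper closes the loop by a different bookkeeping that your choice of pressure estimate forecloses. It keeps the mean-subtracted forms of the energy and pressure lemmas, so that every nonlinear feedback term carries a factor of $B^{3/4}$ or $B^{3/2}$, and it iterates the single homogeneity-matched quantity ${\mathcal E}(r)=A^{3/2}(r)+D^2(r)$. Chaining Lemmas \ref{FinalCEst}--\ref{FinalDest} then yields the \emph{linear} recursion ${\mathcal E}(\tfrac12\theta\rho)\le c\{\theta^2{\mathcal E}(\rho)+\theta^{-9}B^{3/2}(\rho)\}$ (the residual superlinear pieces $A^{3/2}B^{3/2}$ being absorbed using the crude bound $A(\rho)\le A(1/2)$, since they are already multiplied by the small factor $B^{3/2}$). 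Choosing $\theta$ with $c\theta^2\le 1/2$ --- a universal choice, independent of the solution --- gives ${\mathcal E}(\tau^k\rho)\le 2^{-(k+1)}{\mathcal E}(\rho)+2c_\tau\epsilon^{3/2}$, so the solution-dependent starting value is killed geometrically and only the $\epsilon$-term survives in the limit; one then recovers the smallness of $C$ from Lemma \ref{FinalCEst} and of $r\overline E(r)$ from the reduction. With your version of the $D$-estimate, $D^2$ is fed by $C^2\sim A^3$ rather than by $A^{3/2}B^{3/2}$, so no power of $A$ makes the combined recursion linear, and no choice of $\theta$ independent of the solution works. To repair your argument you should either (a) restore the mean subtraction in both the pressure source term ($uu^t-[uu^t]_\rho$, estimated via Poincar\'e by $A^{3/4}B^{3/4}$) and the convective term of the energy inequality, and then iterate ${\mathcal E}=A^{3/2}+D^2$ as above, or (b) supply a genuinely separate first-phase argument, with universal constants, showing $\limsup_{r\to0}A(r)<\infty$ before attempting the smallness phase; as written, neither is done.
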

In view of  Corollary \ref{PfivethirdsEst}, a proof of Proposition \ref{FundProp} establishes Theorem \ref{mainthm}.   To this end, we shall need three estimates involving the following integral quantities. For $(x,t)\in Q_1$, and 
$r>0$ so small that $Q_r(x,t)\subset Q_1$, we define

$$
\begin{cases}
\quad A(x,t,r):=\sup_{|t-s|\le r^2/2}\frac{1}{r}\int_{B_r(x)}\{|u(y,s)|^2+|F(y,s)|^2\}dy\\\\
\quad B(x,t,r):=\frac{1}{r}\iint_{Q_r(x,t)}\left\{|\nabla u(y,s)|^2 + |\nabla F(y,s)|^2\right\}dyds\\\\
\quad C(x,t,r):=\frac{1}{r^2}\iint_{Q_r(x,t)}\{|u(y,s)|^3+|F(y,s)|^3\}dyds\\\\
\quad D(x,t,r):=\frac{1}{r^2}\iint_{Q_r(x,t)}|p(y,s)|^{3/2}dyds\\
\end{cases}.
$$
Our arguments below are independent of $(x,t)$, so without any loss of generality we establish our results for $(x,t)=(0,0)$. For ease of notation, we also write $A(r):=A(0,0,r), B(r):=B(0,0,r), C(r):=C(0,0,r),$ and 
$D(r):=D(0,0,r)$ for $0<r\le 1.$  

\par We will need three estimates before undertaking the proof of Proposition \ref{FundProp}. They are very similar to the sequence of Lemmas needed in \cite{L} and as in the 
previous section we follow the path of \cite{LS} closely.  The statement of the lemmas in fact are nearly identical to the ones use to prove the version of Proposition \ref{FundProp} in \cite{LS}, but unfortunately 
the proofs had to be modified. Nevertheless, we would like to emphasize that the work of \cite{L} and \cite{LS} served as a guiding light for this section.

\begin{lem}\label{FinalCEst}  There is a universal constant $c>0$ such that
$$
C(r)\le c\left\{ \left(\frac{r}{\rho}\right)^3 A^{3/2}(\rho) +\left(\frac{\rho}{r}\right)^3 A^{3/4}(\rho) B^{3/4}(\rho) \right\},
$$
for $0<r\le \rho$.
\end{lem}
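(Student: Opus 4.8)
The plan is to estimate $C(r)$ by splitting each of $u$ and $F$ into its mean over $Q_\rho$ (or over the spatial ball $B_\rho$ at each time) plus a fluctuation, since the $L^3$ norm of the mean part is controlled by $A(\rho)$ alone and the fluctuation part is exactly where the Sobolev/interpolation machinery enters. Concretely, for the fluctuation I would apply the Gagliardo--Nirenberg--Sobolev interpolation inequality \eqref{InterpolationINEQ} on the ball $B_\rho$ at each fixed time $s$, writing, with $v$ standing for $u(\cdot,s)-u_{B_\rho}(s)$ (and similarly for $F$),
\[
\int_{B_\rho}|v|^3\,dy \le C\left( |v|_{L^2(B_\rho)}^{3/2}|\nabla v|_{L^2(B_\rho)}^{3/2} + \rho^{-3/2}|v|_{L^2(B_\rho)}^{3}\right),
\]
i.e.\ taking $r=3$, $\alpha=1/2$ in \eqref{InterpolationINEQ}. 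Then I would integrate in time over $(-\rho^2/2,\rho^2/2)$, pull the sup in time out of the $|v|_{L^2}$ factors (that produces powers of $A(\rho)$), apply H\"older in time to the remaining $\int |\nabla v|_{L^2(B_\rho)}^{3/2}\,ds$ against the constant function to match the $L^2$-in-spacetime gradient integral (that produces $B(\rho)^{3/4}$), and finally convert back to the unscaled quantities $A,B,C$ by keeping track of the powers of $\rho$.

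The remaining ingredient is to pass from the integral of $|v|^3$ over $Q_\rho$ to the integral of $|u|^3$ over the smaller cylinder $Q_r$. Here I would use that on $B_r\subset B_\rho$ one has $|u|^3 \le 4(|u-u_{B_\rho}(s)|^3 + |u_{B_\rho}(s)|^3)$ (up to the usual constant $2^{3-1}$), that $\iint_{Q_r}|u-u_{B_\rho}(s)|^3 \le \iint_{Q_\rho}|u-u_{B_\rho}(s)|^3$ trivially by domain monotonicity, and that the mean term contributes
\[
\iint_{Q_r}|u_{B_\rho}(s)|^3\,dy\,ds \le C\,r^3\int_{-\rho^2/2}^{\rho^2/2}\left(\rho^{-3}\int_{B_\rho}|u|^2\right)^{3/2}ds \le C\, r^3\, \rho^2\, \rho^{-9/2}\sup_s\Big(\int_{B_\rho}|u|^2\Big)^{3/2},
\]
which after dividing by $r^2$ and recognizing $\rho^{-1}\sup_s\int_{B_\rho}|u|^2 = A(\rho)$ (plus the $F$ analogue) gives exactly the $(r/\rho)^3 A^{3/2}(\rho)$ term. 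The fluctuation piece, divided by $r^2$, after the GNS-plus-H\"older computation yields $(\rho/r)^3 A^{3/4}(\rho)B^{3/4}(\rho)$; I would double-check the $\rho$-exponent bookkeeping, since the two averaging notations ($u_{Q_\rho}$ in the definition of $E$ versus the time-slice average $u_{B_\rho}(s)$ natural here) must be handled consistently — using the time-slice average is cleaner for invoking the spatial Sobolev inequality.

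The main obstacle, and the only real subtlety, is the exponent accounting: getting the powers $3/4,3/4$ on $A,B$ and the precise powers $(r/\rho)^3$, $(\rho/r)^3$ right requires carefully tracking how $\rho$ enters through the Sobolev constant (the $\rho^{-3/2}$ weight and the scaling of $|v|_{L^2(B_\rho)}$ relative to $A(\rho)$), through the time integration measure $\rho^2$, and through the normalizing factor $r^{-2}$ in the definition of $C(r)$. Everything else is a routine combination of H\"older's inequality in time, the elementary convexity bound $(a+b)^3\le 4(a^3+b^3)$, and domain monotonicity of integrals; no genuinely new idea beyond the standard Caffarelli--Kohn--Nirenberg-style estimate (cf.\ the analogous lemma in \cite{LS}) is needed, and the presence of $F$ only doubles the number of terms without changing the structure, because $F$ satisfies the same energy bounds as $u$.
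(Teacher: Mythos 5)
The paper does not actually carry out this computation: its ``proof'' consists of quoting Lemma 2.1 of \cite{L} for a scalar field $v\in H^1(Q_1)\cap L^\I((-1/2,1/2);L^2(B_1))$ and applying it to $u$ and $F$ separately. Your proposal is an attempt to reconstruct that cited lemma, and the overall strategy (split into time-slice mean plus fluctuation, bound the mean by $A$, interpolate the fluctuation between $L^2$ and $\dot H^1$, H\"older in time) is the right one. However, two steps as written do not produce the stated inequality.

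First, in the mean term you integrate in time over $(-\rho^2/2,\rho^2/2)$ instead of over $(-r^2/2,r^2/2)$, which is all that $Q_r$ requires. Your chain gives $C\,r^3\rho^2\rho^{-9/2}\bigl(\sup_s\int_{B_\rho}|u|^2\bigr)^{3/2}=C\,r^3\rho^{-1}A^{3/2}(\rho)$, and after dividing by $r^2$ this is $C\,(r/\rho)\,A^{3/2}(\rho)$, not $(r/\rho)^3A^{3/2}(\rho)$. Keeping the time integral over $(-r^2/2,r^2/2)$ restores the extra factor $r^2/\rho^2$ and yields the claimed $(r/\rho)^3$; as written, you only prove a strictly weaker lemma (which, as it happens, would still suffice for the absorption argument in Proposition \ref{FundProp}, but it is not the stated estimate).

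Second, and more seriously, you apply \eqref{InterpolationINEQ} to the fluctuation $v=u-u_{B_\rho}(s)$ and then only process the product term $|v|_{L^2}^{3/2}|\nabla v|_{L^2}^{3/2}$. The inequality \eqref{InterpolationINEQ} also carries the lower-order term $\rho^{-3/2}|v|_{L^2(B_\rho)}^3$, which you never address. If one bounds it crudely by $|v|_{L^2}\le|u|_{L^2}\le(\rho A(\rho))^{1/2}$, time integration and division by $r^2$ produce a term of order $(\rho/r)^2A^{3/2}(\rho)$ --- i.e.\ $A^{3/2}(\rho)$ with a \emph{large} coefficient. This is fatal for the intended application: the whole point of the lemma is that $A^{3/2}(\rho)$ appears only with the small prefactor $(r/\rho)^3$, so that it can be absorbed in step 2 of the proof of Proposition \ref{FundProp}; a coefficient $\theta^{-2}$ in front of $A^{3/2}(\rho)$ destroys that iteration. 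The missing ingredient is the Poincar\'e inequality for the mean-zero function $v$: since $\int_{B_\rho}v\,dx=0$, one has $|v|_{L^2(B_\rho)}\le C\rho|\nabla u|_{L^2(B_\rho)}$, so that $\rho^{-3/2}|v|_{L^2}^3\le C|v|_{L^2}^{3/2}|\nabla u|_{L^2}^{3/2}$ and the lower-order term is absorbed into the same $A^{3/4}(\rho)B^{3/4}(\rho)$ contribution as the product term (equivalently, one should use from the outset the scale-invariant Sobolev--Poincar\'e interpolation $|v|_{L^3(B_\rho)}\le C|v|_{L^2(B_\rho)}^{1/2}|\nabla v|_{L^2(B_\rho)}^{1/2}$ valid for mean-zero $v$, which is the form actually used in \cite{L} and \cite{LS}). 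With these two corrections your argument goes through; the treatment of $F$ in parallel with $u$ is indeed unproblematic.
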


\begin{proof}
This assertion follows directly from following the well known inequality established in \cite{L} (Lemma 2.1). There is a universal constant $c>0$ such that
$$
C_0(r)\le c\left\{ \left(\frac{r}{\rho}\right)^3 A_0^{3/2}(\rho) +\left(\frac{\rho}{r}\right)^3 A_0^{3/4}(\rho) B_0^{3/4}(\rho) \right\},\quad 0<r\le \rho\le 1
$$
for all 
$$
v\in H^1(Q_1)\cap L^\I((-1/2,1/2); L^2(B_1)),
$$
where
$$
\begin{cases}
\quad A_0(r):=\sup_{|t|\le r^2/2}\frac{1}{r}\int_{B_r}|v(y,s)|^2dy\\
\quad B_0(r):=\frac{1}{r}\iint_{Q_r}|\nabla v(y,s)|^2dyds\\
\quad C_0(r):=\frac{1}{r^2}\iint_{Q_r}|v(y,s)|^3dyds
\end{cases}.
$$
\end{proof}

\begin{lem}\label{FinalABEst} There is a universal constant $c>0$ such that 
$$
A(r/2)+B(r/2)\le c\left\{C^{2/3}(r) +C^{1/3}(r) D^{2/3}(r)+A^{1/2}(r)B^{1/2}(r)C^{1/3}(r)\right\},
$$
for $0<r\le 1.$
\end{lem}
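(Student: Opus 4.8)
The plan is to test the generalized local energy inequality \eqref{NSEenergy} against a standard parabolic cutoff and bound the resulting right-hand side term by term, following the scheme of Lin \cite{L} and Ladyzhenskaya--Seregin \cite{LS}. Fix $r\in(0,1]$ and pick $\phi\in C^\infty_c(Q_r)$ with $0\le\phi\le1$, $\phi\equiv1$ on $Q_{r/2}$, $|\nabla\phi|\le C/r$, and $|\phi_t|+|\Delta\phi|\le C/r^2$. Insert $\phi$ into \eqref{NSEenergy}. Letting the upper time endpoint $t\uparrow r^2/8$ controls the gradient term $\iint_{Q_{r/2}}(|\nabla u|^2+|\nabla F|^2)=\tfrac r2 B(r/2)$, while fixing any $s_0\in(-r^2/8,r^2/8)$ as upper endpoint and then taking a supremum over $s_0$ controls $\sup_{s_0}\int_{B_{r/2}}(|u|^2+|F|^2)=\tfrac r2 A(r/2)$. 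Since every term on the right of \eqref{NSEenergy} is supported in $\overline{Q_r}$, replacing it by its absolute value reduces the lemma to the estimate
\[
\mathcal{I}_r:=\iint_{Q_r}\left[\,|\phi_t+\Delta\phi|\,\frac{|u|^2+|F|^2}{2}+\bigl|u\otimes\nabla\phi\cdot FF^t\bigr|+\Bigl(\frac{|u|^2+|F|^2}{2}+|p|\Bigr)|u|\,|\nabla\phi|\,\right]dyds\ \lesssim\ r\,\mathcal{R},
\]
where $\mathcal{R}:=C^{2/3}(r)+C^{1/3}(r)D^{2/3}(r)+A^{1/2}(r)B^{1/2}(r)C^{1/3}(r)$; dividing by $r$ then finishes the proof.

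The first and last contributions to $\mathcal{I}_r$ are routine. For the $|\phi_t+\Delta\phi|$ term, H\"older in spacetime gives $\iint_{Q_r}(|u|^2+|F|^2)\le|Q_r|^{1/3}\bigl(\iint_{Q_r}(|u|^3+|F|^3)\bigr)^{2/3}\lesssim r^{5/3}\bigl(r^2C(r)\bigr)^{2/3}=r^3C^{2/3}(r)$, so this term is $\lesssim r\,C^{2/3}(r)$. For the pressure term, $|\nabla\phi|\le C/r$ together with H\"older in spacetime (exponents $3/2$ and $3$) gives $\tfrac1r\iint_{Q_r}|p|\,|u|\le\tfrac1r\bigl(\iint_{Q_r}|p|^{3/2}\bigr)^{2/3}\bigl(\iint_{Q_r}|u|^3\bigr)^{1/3}\lesssim\tfrac1r\,r^{4/3}D^{2/3}(r)\,r^{2/3}C^{1/3}(r)=r\,C^{1/3}(r)D^{2/3}(r)$.

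The crux is the cubic convective terms, where a crude H\"older estimate would only yield $\tfrac1r\iint_{Q_r}(|u|^3+|F|^3)=r\,C(r)$, which is not scale-invariant and hence useless. The remedy is a mean-value subtraction. Since $\nabla\cdot u=0$, for a.e.\ $s$ the function $\phi(\cdot,s)\in C^\infty_c(B_r)$ satisfies $\int_{B_r}u\cdot\nabla\phi\,dy=0$, so in $\int_{B_r}\bigl(\tfrac{|u|^2}2+\tfrac{|F|^2}2\bigr)u\cdot\nabla\phi$ we may replace $\tfrac{|u|^2}2+\tfrac{|F|^2}2$ by its deviation from its spatial average over $B_r$; for the coupling term we first rewrite $u\otimes\nabla\phi\cdot FF^t=\sum_{k=1}^3(u\cdot F_k)(F_k\cdot\nabla\phi)$ and, using $\nabla\cdot F_k=0$, subtract the spatial average of $u\cdot F_k$. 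In each case the Sobolev--Poincar\'e inequality $\|g-\overline{g}\|_{L^{3/2}(B_r)}\lesssim\|\nabla g\|_{L^1(B_r)}$ in $\R^3$ (scale-invariant), applied with $g=|u|^2,\ |F|^2,\ u\cdot F_k$ together with the pointwise bound $|\nabla g|\lesssim(|u|+|F|)(|\nabla u|+|\nabla F|)$ and Cauchy--Schwarz, estimates the relevant spatial factor by $(\|\nabla u\|_{L^2(B_r)}+\|\nabla F\|_{L^2(B_r)})(\|u\|_{L^2(B_r)}+\|F\|_{L^2(B_r)})$; pairing this against $\|u\|_{L^3(B_r)}$ or $\|F\|_{L^3(B_r)}$ via H\"older, pulling out $\sup_s(\|u(\cdot,s)\|_{L^2(B_r)}+\|F(\cdot,s)\|_{L^2(B_r)})$, and then applying Cauchy--Schwarz and H\"older in the time variable converts the whole expression into $\tfrac1r\cdot r^{1/2}A^{1/2}(r)\cdot r^{1/2}B^{1/2}(r)\cdot r\,C^{1/3}(r)=r\,A^{1/2}(r)B^{1/2}(r)C^{1/3}(r)$. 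Summing the four contributions gives $\mathcal{I}_r\lesssim r\,\mathcal{R}$, as required.

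I expect the main obstacle to be bookkeeping rather than ideas: one must track the powers of $r$ through each cubic term so that the final bound is genuinely scale-invariant, and in particular notice that the $FF^t$ coupling term has to be reorganized as $\sum_k(u\cdot F_k)(F_k\cdot\nabla\phi)$ so that the divergence-free condition on the columns $F_k$ — which is exactly where the standing assumption \eqref{divFtranspose} enters — can be exploited just as $\nabla\cdot u=0$ is for the other cubic terms; handling that term crudely would destroy the estimate. One should also check that the mean-value subtraction and the Sobolev--Poincar\'e step are legitimate for a.e.\ $s$, which follows from $u(\cdot,s),F(\cdot,s)\in H^1(B_r)$ for a.e.\ $s$, a consequence of the local energy inequality \eqref{NSEenergy} itself.
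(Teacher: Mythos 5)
Your proposal is correct and follows essentially the same route as the paper's proof: the same rescaled cutoff in the local energy inequality, the same mean-value subtraction justified by $\nabla\cdot u=0$ and $\nabla\cdot F_k=0$ (the paper packages this via the auxiliary functions $a(t)$ and $b(t)$ in its remark preceding the proof), and the same Poincar\'e--H\"older chain producing the $A^{1/2}B^{1/2}C^{1/3}$ term. The power counting in each of your four contributions matches the paper's computation.
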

\begin{rem}
Recall the energy inequality \eqref{NSEenergy}: $\phi\in C^\I_c(Q_1), \phi\ge 0$
\begin{align*}
\int_{B_1\times\{t\}}\phi\left(\frac{|u|^2}{2}+\frac{|F|^2}{2}\right)dx &+ \int^t_{-1/2}\int_{B_1}\phi\left(|\nabla u|^2+|\nabla F|^2\right)dxds \le  \int^t_{-1/2}\int_{B_1}\left\{(\phi_t+\Delta \phi)\left(\frac{|u|^2}{2}+\frac{|F|^2}{2}\right)\right.\nonumber\\
&\left.  - F^tu\cdot F^t\nabla\phi+ \left(\frac{|u|^2}{2}+\frac{|F|^2}{2}+p\right)u\cdot \nabla \phi \right\}dxds
\end{align*}
$t\in [-1/2,1/2]$. As 
$$
\nabla\cdot u=0\quad\text{and}\quad \nabla\cdot F^t=0,
$$
we can replace the right hand side of the above inequality by 
{\small
$$
 \int^t_{-1/2}\int_{B_1}\left\{(\phi_t+\Delta \phi)\left(\frac{|u|^2}{2}+\frac{|F|^2}{2}\right)+a(t)\cdot F^t\nabla\phi- F^tu\cdot F^t\nabla\phi+ \left(\frac{|u|^2}{2}+\frac{|F|^2}{2}-b(t)+p\right)u\cdot \nabla \phi \right\}dxds
$$}
for any  $a\in L^1_{\text{loc}}(-1/2,1/2; \R^{3})$ and $b\in L^1_{\text{loc}}(-1/2,1/2)$.  In a crucial step below, we will use 
\begin{equation}\label{choiceAb}
\begin{cases}
a(t):= [F^tu]_{B_r}(t)= \gfint \; F^t(x,t)u(x,t)dx\\
b(t):= \frac{1}{2}|u|^2_{B_r}(t)+\frac{1}{2}|F|^2_{B_r}(t)= \gfint\left(\frac{|u(x,t)|^2}{2}+\frac{|F(x,t)|^2}{2}\right)dx
\end{cases}.
\end{equation}
\end{rem}

\begin{proof}
Choose $\phi \in C^\I_c(Q_1)$, nonnegative, such that $\phi\equiv 1$ on $Q_{1/2}$, and set 
$$
\phi_r(x,t):=\phi\left(\frac{x}{r},\frac{t}{r^2}\right), \quad (x,t)\in Q_r.
$$
Clearly $\phi_r\in C^\I_c(Q_r)$ and
$$
\begin{cases}
|\nabla \phi_r|\le c/r\\
|\partial_t\phi_r+\Delta \phi_r|\le c/r^2
\end{cases}
$$
for a constant $c\ge 0$ independent of $r\in (0,1).$ 

\par Using $\phi_r$ as the test function in the energy inequality \eqref{NSEenergy} and choosing the mappings $a$ and $b$ as in \eqref{choiceAb} gives 
\begin{align}\label{ABest1}
A(r/2)+B(r/2)&\le c\left\{ \frac{1}{r^3} \iint_{Q_r}(|u|^2+|F|^2) + \frac{1}{r^2}\iint_{Q_r}||u|^2-|u|^2_{B_r}||u| \right. \nonumber \\
&  +\frac{1}{r^2}\iint_{Q_r}||F|^2-|F|^2_{B_r}||u| +  \frac{1}{r^2}\iint_{Q_r}||F^tu|-[F^tu]_{B_r}||F| \nonumber \\
                      & \left. +\frac{1}{r^2}\left(\iint_{Q_r}|u|^3\right)^{1/3} \left(\iint_{Q_r}|p|^{3/2}\right)^{2/3}\right\}.
\end{align}
We now proceed to estimate each integral in the inequality above \eqref{ABest1} in terms of $A,B,C,D$. First note that

\begin{align*}
\frac{1}{r^3}\iint_{Q_r}(|u|^2+|F|^2) &\le c \frac{1}{r^3} \left(\iint_{Q_r}|u|^3+|F|^3\right)^{2/3} |Q_r|^{1/3} \\
                                              &\le c r^{5/3-3} \left(\iint_{Q_r}(|u|^3+|F|^3)\right)^{2/3}\\
                                              &= c C(r)^{2/3},
\end{align*}
for some universal constant $c>0$. Next, observe
\begin{align*}
\frac{1}{r^2}\left(\iint_{Q_r}|u|^3\right)^{1/3} \left(\iint_{Q_r}|p|^{3/2}\right)^{2/3} & = 
\left(\frac{1}{r^2}\iint_{Q_r}|u|^3\right)^{1/3} \left(\frac{1}{r^2}\iint_{Q_r}|p|^{3/2}\right)^{2/3} \\
& \le C(r)^{1/3}D^{2/3}(r).
\end{align*}
We also have, by employing H\"{o}lder's and Poincar\'{e}'s inequality, 

\begin{align*}
\iint_{Q_r}||u|^2-|u|^2_{B_r}||u| & \le \int^{r^2/2}_{-r^2/2}\left(\int_{B_r}||u|^2-|u|^2_{B_r}|^{3/2}\right)^{2/3}\left(\int_{B_r}|u|^3\right)^{1/3}dt \\
                                                       & \le   \int^{r^2/2}_{-r^2/2}\left(\int_{B_r}|u||\nabla u|\right) \left(\int_{B_r}|u|^3\right)^{1/3}dt \\
                                                       &\le \int^{r^2/2}_{-r^2/2} \left(\int_{B_r}|u|^2\right)^{1/2}\left(\int_{B_r}|\nabla u|^2\right)^{1/2} \left(\int_{B_r}|u|^3\right)^{1/3}dt\\
                                                       &\le c r^{1/2}A(r)^{1/2} \int^{r^2/2}_{-r^2/2}\left(\int_{B_r}|\nabla u|^2\right)^{1/2} \left(\int_{B_r}|u|^3\right)^{1/3}dt\\
                                                       &\le c r^{1/2}A(r)^{1/2} \left(\iint_{Q_r}| u|^3\right)^{1/3} \left(\int^{r^2/2}_{-r^2/2}\left(\int_{B_r}|\nabla u|^2\right)^{3/4}dt\right)^{2/3}\\
                                                       &\le c r^{1/2+2/3}A(r)^{1/2}C^{1/3}(r)\left(\iint_{Q_r}|\nabla u|^2\right)^{1/2}r^{1/2}\\
                                                       &\le c r^{1+2/3}A(r)^{1/2}C^{1/3}(r)\left(\iint_{Q_r}|\nabla u|^2\right)^{1/2}\\
                                                       &\le cr^{1+2/3+1/2}A(r)^{1/2}C^{1/3}(r)B^{1/2}(r)\\
                                                       &\le cr^{2+1/6}A(r)^{1/2}C^{1/3}(r)B^{1/2}(r)\\
                                                       &\le cr^{2}A(r)^{1/2}C^{1/3}(r)B^{1/2}(r),
\end{align*}
 for some universal constant $c$ independent of $r\in (0,1]$.  Arguing in virtually the same manner shows
$$
\frac{1}{r^2}\iint_{Q_r}||F|^2-|F|^2_{B_r}||u| +  \frac{1}{r^2}\iint_{Q_r}||F^tu|-[F^tu]_{B_r}||F| \le cA(r)^{1/2}C^{1/3}(r)B^{1/2}(r).
$$
Substituting all of these bounds into inequality \eqref{ABest1} gives the desired estimate
$$
A(r/2)+ B(r/2)\le c\left\{ C^{2/3}(r) + C^{1/3}D^{2/3}(r)+  A(r)^{1/2}C^{1/3}(r)B^{1/2}(r)\right\}.
$$
\end{proof}


\begin{lem}\label{FinalDest} There is a universal constant $c>0$ such that
$$
D(r)\le c\left\{ \frac{r}{\rho}D(\rho) +\left(\frac{\rho}{r}\right)^2 A^{3/4}(\rho) B^{3/4}(\rho) \right\},
$$
$0<r\le \rho$.
\end{lem}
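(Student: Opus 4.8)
The plan is to run the classical localized pressure argument of \cite{CKN, L, LS}. Write $U_{ij}:=u_iu_j-\sum_{k=1}^3(F_k)_i(F_k)_j$; since $\nabla\cdot u=\nabla\cdot F_k=0$, equation \eqref{pEq} says that for a.e.\ $t$ one has $-\Delta p(\cdot,t)=\partial_i\partial_j U_{ij}(\cdot,t)$ in $\mathcal{D}'(B_\rho)$ (the admissible $t$ are those for which $u(\cdot,t),F(\cdot,t)\in H^1(B_\rho)$ and $p(\cdot,t)\in L^{3/2}(B_\rho)$, which is all but a null set once the right side of the lemma is finite). Fix a cutoff $\chi\in C^\I_c(B_\rho)$ with $0\le\chi\le1$ and $\chi\equiv1$ on $B_{\rho/2}$, let $[\,\cdot\,]_{B_\rho}(t)$ denote the spatial average over $B_\rho$, and put
\[
p_1(\cdot,t):=\RR_i\RR_j\!\left(\chi\bigl(U_{ij}(\cdot,t)-[U_{ij}]_{B_\rho}(t)\bigr)\right),\qquad p_2:=p-p_1,
\]
where $\RR_i$ are the Riesz transforms on $\R^3$. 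Then $-\Delta p_1=\partial_i\partial_j\bigl(\chi(U-[U]_{B_\rho})\bigr)$ on $\R^3$, hence $-\Delta p_1=\partial_i\partial_j U$ on $B_{\rho/2}$ (the derivatives annihilate the constant), so $p_2(\cdot,t)$ is harmonic on $B_{\rho/2}$ for a.e.\ $t$.

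\emph{The local part.} Calder\'on--Zygmund gives $\|p_1(\cdot,t)\|_{L^{3/2}(\R^3)}\le C\|U(\cdot,t)-[U]_{B_\rho}(t)\|_{L^{3/2}(B_\rho)}$, and the Sobolev--Poincar\'e inequality $\|g-\bar g_{B_\rho}\|_{L^{3/2}(B_\rho)}\le C\|\nabla g\|_{L^1(B_\rho)}$ (with dimensional constant) together with $|\nabla U|\le C(|u|\,|\nabla u|+|F|\,|\nabla F|)$ and Cauchy--Schwarz in space yields
\[
\|p_1(\cdot,t)\|^{3/2}_{L^{3/2}(\R^3)}\le C\Bigl(\|u\|_{L^2(B_\rho)}^{3/2}\|\nabla u\|_{L^2(B_\rho)}^{3/2}+\|F\|_{L^2(B_\rho)}^{3/2}\|\nabla F\|_{L^2(B_\rho)}^{3/2}\Bigr)(t).
\]
Integrating over $t\in(-\rho^2/2,\rho^2/2)$, using $\sup_t\|u(\cdot,t)\|_{L^2(B_\rho)}^{3/2}\le(\rho A(\rho))^{3/4}$, and applying H\"older in time (exponents $4/3,4$) to $\int\|\nabla u\|_{L^2(B_\rho)}^{3/2}\,dt\le(\rho B(\rho))^{3/4}\rho^{1/2}$ (and likewise for $F$) gives $\iint_{Q_\rho}|p_1|^{3/2}\le C\rho^2 A^{3/4}(\rho)B^{3/4}(\rho)$. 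Since $Q_r\subset Q_\rho$, this already bounds $\frac1{r^2}\iint_{Q_r}|p_1|^{3/2}$ by $C(\rho/r)^2A^{3/4}(\rho)B^{3/4}(\rho)$, the second term of the lemma.

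\emph{The harmonic part.} Assume first $r\le\rho/4$. By the mean value property, for a.e.\ $t$, $\sup_{B_r}|p_2(\cdot,t)|^{3/2}\le C\rho^{-3}\int_{B_{\rho/2}}|p_2(\cdot,t)|^{3/2}$, hence $\int_{B_r}|p_2(\cdot,t)|^{3/2}\le C(r/\rho)^3\int_{B_{\rho/2}}|p_2(\cdot,t)|^{3/2}$; integrating over $t\in(-r^2/2,r^2/2)\subset(-\rho^2/2,\rho^2/2)$ and dividing by $r^2$,
\[
\frac1{r^2}\iint_{Q_r}|p_2|^{3/2}\le C\frac r\rho\cdot\frac1{\rho^2}\iint_{Q_\rho}|p_2|^{3/2}\le C\frac r\rho\Bigl(D(\rho)+\frac1{\rho^2}\iint_{Q_\rho}|p_1|^{3/2}\Bigr),
\]
and by the previous paragraph $\frac1{\rho^2}\iint_{Q_\rho}|p_1|^{3/2}\le CA^{3/4}(\rho)B^{3/4}(\rho)$. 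Adding the two contributions to $D(r)=\frac1{r^2}\iint_{Q_r}|p|^{3/2}$ and absorbing the residual $\frac r\rho A^{3/4}(\rho)B^{3/4}(\rho)$ into $(\rho/r)^2A^{3/4}(\rho)B^{3/4}(\rho)$ (legitimate since $r\le\rho$) produces the asserted inequality. For $\rho/4<r\le\rho$ it is trivial, since $D(r)\le(\rho/r)^2D(\rho)\le16D(\rho)\le64(r/\rho)D(\rho)$. The one place that needs genuine care is the pressure splitting: one must verify that $p_2$ is really harmonic on $B_{\rho/2}$, that $p_1(\cdot,t)$ is well defined for a.e.\ $t$, and---crucially---that subtracting the average $[U]_{B_\rho}$ \emph{before} applying Calder\'on--Zygmund is what upgrades a crude $\frac{\rho^2}{r^2}C(\rho)$-type bound to the sharp $(\rho/r)^2A^{3/4}(\rho)B^{3/4}(\rho)$; the rest is H\"older's inequality and parabolic scaling.
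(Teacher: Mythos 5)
Your proof is correct and follows essentially the same route as the paper: split $p=p_1+p_2$ with $p_1$ the Calder\'on--Zygmund part of $\partial_i\partial_j\bigl(U-[U]_{B_\rho}\bigr)$, bound it by $\rho^2A^{3/4}(\rho)B^{3/4}(\rho)$ via Poincar\'e, Cauchy--Schwarz and H\"older in time, and exploit harmonicity of $p_2$ to get the $(r/\rho)$ decay. The only (immaterial) differences are that the paper localizes by solving a Dirichlet problem for a potential $Q_1$ on $B_\rho$ rather than by a cutoff plus Riesz transforms, so its $p_2$ is harmonic on all of $B_\rho$ and the monotonicity of $r\mapsto r^{-3}\int_{B_r}|p_2|^{3/2}$ handles every $r\le\rho$ without your case split at $r=\rho/4$.
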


\begin{proof}
1. Taking the divergence of the first equation in \eqref{System1} gives 
\begin{equation}\label{NewPeq}
-\Delta p=\nabla\cdot\left[\nabla\cdot\left(uu^t -FF^t\right)\right]=\nabla\cdot\left[uu^t-[uu^t]_{\rho}-(FF^t-[FF^t]_{\rho})\right],
\end{equation}
as $(u\cdot \nabla)u=\nabla\cdot uu^t$.   Here $[uu^t]_{\rho}$ denotes the average of $uu^t$ on $B_\rho$ and likewise for $[FF^t]_{\rho}$. 

\par Now let  $Q_1=Q_1(t)\in W_0^{2,3/2}(B_\rho;\R^{3\times 3})$ denote the unique weak solution of the PDE
$$
\begin{cases}
-\Delta Q_1 = uu^t-[uu^t]_{\rho}-(FF^t-[FF^t]_{\rho}), &\quad x\in B_\rho \\
\hspace{.25in} Q_1 = 0, &\quad x\in \partial B_\rho 
\end{cases},
$$
for (almost every) $|t|<1/2$.  It is immediate that
$$
p_1=\nabla\cdot[\nabla \cdot Q_1]\in L^{3/2}(B_\rho)
$$
satisfies \eqref{NewPeq}, and by the Calder\'{o}n-Zygmund inequality
\begin{equation}\label{p1Ineq}
|p_1|_{L^{3/2}(B_\rho)}\le  |\nabla\cdot[\nabla \cdot Q_1]|_{L^{3/2}(B_\rho)}\le C (|uu^t- [uu^t]_{\rho}|_{L^{3/2}(B_\rho)}+|FF^t- [FF^t]_{\rho}|_{L^{3/2}(B_\rho)}).
\end{equation}
It is also clear that
$$
p_2:=p-p_1
$$
is harmonic in $B_\rho$, for almost every $|t|<1/2$. 

\par 2.  By the estimate \eqref{p1Ineq} and Poincar\'{e}'s inequality, 

\begin{align*}
\left(\int_{B_\rho}|p_1|^{3/2}dx\right)^{2/3} &\le c \int_{B_\rho}\{|u||\nabla u| + |F||\nabla F|\}dx\\
										      &\le c\left(\int_{B_\rho}(|u|^{2}+|F|^{2})dx\right)^{1/2} \left(\int_{B_\rho}(|\nabla u|^{2}+|\nabla F|^{2})dx\right)^{1/2}\\
										      &\le c\rho^{1/2} A^{1/2}(\rho) \left(\int_{B_\rho}(|\nabla u|^{2}+|\nabla F|^{2})dx\right)^{1/2}.
\end{align*}
Integrating time from $-\rho^2/2$ to $+\rho^2/2$, gives

\begin{align*}
\iint_{Q_\rho}|p_1|^{3/2}& \le c\rho^{3/4}A^{3/4}(\rho)\int^{\rho^2/2}_{-\rho^2/2}\left(\int_{B_\rho}(|\nabla u|^2+|\nabla F|^2)dx\right)^{3/4}\\
					   & \le c\rho^{3/4}A^{3/4} \left(\iint_{Q_\rho}(|\nabla u|^2+|\nabla F|^2)dxdt\right)^{3/4}\rho^{1/2}\\
					   & \le c\rho^{2}A^{3/4} B(\rho)^{3/4}.
\end{align*}
It also follows that 

\begin{align*}
\iint_{Q_\rho}|p_2|^{3/2} & \le \iint_{Q_\rho}|p|^{3/2} +\iint_{Q_\rho}|p_1|^{3/2}  \\
                                           &\le c\rho^2\left\{ D(\rho) +A^{3/4} B(\rho)^{3/4}\right\}.
\end{align*}
\par 3. $p_2$ is harmonic and thus $|p_2|^{3/2}$ is subharmonic for almost every $|t|<1/2$.  In particular, 
$$
r\rightarrow \frac{1}{r^3}\int_{B_r}|p|^{3/2}dx
$$
is nondecreasing.  Consequently, 
$$
\frac{1}{r^2}\iint_{Q_r}|p_2|^{3/2} \le \frac{r}{\rho}\cdot \frac{1}{\rho^2}\iint_{Q_\rho}|p_2|^{3/2} 
$$
for $0<r\le \rho.$ Combining the preceding inequalities yields
\begin{align*}
D(r)&= \frac{1}{r^2}\iint_{Q_r}|p|^{3/2}dxdt  \\
       &\le \frac{1}{r^2}\iint_{Q_r}|p_1|^{3/2}dxdt+\frac{1}{r^2}\iint_{Q_r}|p_2|^{3/2}dxdt \\
       &\le \frac{1}{r^2}\iint_{Q_\rho}|p_1|^{3/2}dxdt+\frac{r}{\rho}\cdot \frac{1}{\rho^2}\iint_{Q_\rho}|p_2|^{3/2}dxdt \\
       &\le \left(\frac{\rho}{r}\right)^2A^{3/4}(\rho)B^{3/4}(\rho)+c\frac{r}{\rho}\left[D(\rho)+A^{3/4} B(\rho)^{3/4}\right] \\
       &\le c\left\{ \frac{r}{\rho}D(\rho) + \left(\frac{\rho}{r}\right)^2A^{3/4} B(\rho)^{3/4}\right\},
\end{align*}
as desired.
\end{proof}
We are finally ready to prove of Proposition \ref{FundProp}.  We remark that this argument follows very closely with Proposition 2.9 in \cite{LS}. Nevertheless, we provide it here for completion.  
\begin{proof} (of Proposition \ref{FundProp})
1. Set 
$$
{\mathcal E}(r):=A^{3/2}(r)+D^2(r), \quad 0<r\le 1
$$
and observe
\begin{align}\label{rEboundbyEE}
r \overline{E}(r) &= r \left\{\left(\ffint |u|^3\right)^{1/3}+r\left(\ffint |p|^{3/2}\right)^{2/3}+\left(\ffint |F|^3\right)^{1/3}\right\}\nonumber \\
&\le c\left\{ C(r)^{1/3}+ D(r)^{2/3}\right\} \nonumber \\
&\le c\left\{C(r)+D^2(r)\right\}^{1/3} \nonumber \\
&\le c\left\{C(r)+{\mathcal E}(r)\right\}^{1/3}.
\end{align}
Recall that our aim is to show that there is $\epsilon>0$ such that
$$
\limsup_{r\rightarrow 0^+}B(r)<\epsilon\quad \Rightarrow \quad\liminf_{r\rightarrow 0^+}r \overline{E}(r)<\epsilon_5.
$$
Therefore, we assume that $\limsup_{r\rightarrow 0^+}B(r)<\epsilon$ and choose $\epsilon$ below to establish the above implication. 

\par 2.  Let $\theta,\rho \in (0,1/2)$ be fixed and observe that easy corollaries of Lemmas  \ref{FinalCEst}, \ref{FinalABEst}, and \ref{FinalDest} are
$$
\begin{cases}
C(\theta\rho) \le c\left\{\theta^3 A^{3/2}(\rho) +\theta^{-3} A^{3/4}(\rho) B^{3/4}(\rho) \right\},\\\\
A^{3/2}\left(\frac{1}{2}\theta \rho\right)  \le c\left\{ C(\theta \rho) +D^2(\theta\rho)+A^{3/2}(\theta\rho)B^{3/2}(\theta\rho) \right\},\\\\
D(\theta\rho) \le c\left\{ \theta D(\rho) +\theta^{-2} A^{3/4}(\rho) B^{3/4}(\rho) \right\}.
\end{cases}
$$
Using the inequalities (that are trivial to verify)
$$
A(\theta\rho)\le \frac{1}{\theta}A(\rho)\quad\text{and}\quad B(\theta\rho)\le \frac{1}{\theta}B(\rho),
$$
we also have 
\begin{align*}
A^{3/2}\left(\frac{1}{2}\theta \rho\right) &\le C\left\{ \theta^2D^2(\rho) + \theta^3A^{3/2}(\rho) + \frac{1}{\theta^3}A^{3/4}(\rho)B^{3/4}(\rho) +\left(\frac{1}{\theta^3}+\frac{1}{\theta^4}\right)A^{3/2}(\rho)B^{3/2}(\rho)\right\}
\end{align*}
and
$$
D^2\left(\frac{1}{2}\theta \rho\right) \le c\left\{\theta^2 D^2(\rho)+\frac{1}{\theta^4}A^{3/2}(\rho)B^{3/2}(\rho)\right\}.
$$
With the above estimates on $A^{3/2}$ and $D^2$, 
\begin{align}\label{KeyBoundEE}
{\mathcal E}\left(\frac{1}{2}\theta \rho\right) & = A^{3/2}\left(\frac{1}{2}\theta \rho\right) + D^2\left(\frac{1}{2}\theta \rho\right)\nonumber \\
&\le c\left\{ \theta^2D^2(\rho) + \theta^3A^{3/2}(\rho) +\frac{1}{\theta^3}A^{3/4}(\rho)B^{3/4}(\rho)  +\frac{1}{\theta^{4}}A^{3/2}(\rho)B^{3/2}(\rho)\right\} \nonumber\\
&\le c\left\{ \theta^2{\mathcal E}(\rho) + \frac{1}{\theta^4}A^{3/2}(\rho)B^{3/2}(\rho) +\frac{B^{3/2}(\rho)}{\theta^{9}}\right\} \nonumber\\
&\le c\left\{ \theta^2{\mathcal E}(\rho) + \left(\frac{1}{\theta^9}+\frac{1}{\theta^4}A^{3/2}(\rho) \right)B^{3/2}(\rho)\right\} \nonumber \\
&\le c\left\{ \theta^2{\mathcal E}(\rho) +\frac{1}{\theta^9}B^{3/2}(\rho)\right\}
\end{align}
as $A(\rho)\le A(1/2)\le c.$

\par 3. Now choose $\theta \in (0,1/2)$ so small that for each of the (universal) constants above 
\begin{equation}\label{chooseTHETA}
c\theta^2\le 1/2.
\end{equation}
We may also select $\delta_\epsilon\in (0,1/2)$ so small that 
$$
B(\rho)\le 2\epsilon\quad \text{for}\quad 0<\rho<\delta_\epsilon
$$
This can be done by hypothesis.  With $\theta\in (0,1/2)$ chosen to satisfy \eqref{chooseTHETA}, \eqref{KeyBoundEE} gives
$$
{\mathcal E}\left(\tau \rho\right) \le \frac{1}{2}{\mathcal E}(\rho) +c_\tau \epsilon^{3/2}, \quad 0<\rho<\delta_\epsilon
$$
where we have set
$$
\begin{cases}
\tau:=\frac{1}{2}\theta\\
c_\tau:=c2^{3/2}/\theta^9
\end{cases}.
$$
We interpret \eqref{decayEE} to be a decay estimate for ${\mathcal E}$ as a simple induction argument provides the sequence of inequalities
\begin{equation}\label{decayEE}
{\mathcal E}(\tau^k\rho)\le \frac{1}{2^{k+1}}{\mathcal E}(\rho) +c_\tau \epsilon^{3/2}\sum^{k}_{j=0}\frac{1}{2^j}\le \frac{1}{2^{k+1}}{\mathcal E}(\rho) +2c_\tau \epsilon^{3/2},
\end{equation}
for any fixed $\rho\in (0,\delta_\epsilon)$ and $k\in \N$.

\par 4. Employing \ref{FinalCEst} and \eqref{decayEE},

\begin{align*}
C(\tau^{k+1}\rho) & \le  c\left\{ \tau^3 A^{3/2}(\tau^k\rho) +\tau^{-3} A^{3/4}(\tau^k\rho) B^{3/4}(\tau^k\rho) \right\}\\
                                        &\le c\left\{ \tau^3 {\mathcal E}(\tau^k\rho) +\tau^{-3} {\mathcal E}^{1/2}(\tau^k\rho) (2\epsilon)^{3/4} \right\}\\ 
                                        &\le  c\left\{ \tau^3 \left(\frac{1}{2^{k}}{\mathcal E}(\rho) +2c_\tau \epsilon^{3/2} \right)  \right. \\
                                        & \left.\quad +\tau^{-3} \left(\left(\frac{1}{2^{k}}{\mathcal E}(\rho) +2c_\tau \epsilon^{3/2} \right) \right)^{1/2}\epsilon^{3/4} \right\}.
\end{align*}
In view of the above estimate and inequality \eqref{rEboundbyEE},
\begin{align*}
\limsup_{k\rightarrow \infty}\tau^k\rho \overline{E}(\tau^k\rho) &\le c\limsup_{k\rightarrow \infty} \left[C(\tau^k\rho)+{\mathcal E}(\tau^k\rho)\right]^{1/3} \\
&\le c\left\{ \tau^3 2c_\tau \epsilon^{3/2} +\tau^{-3} \left(2c_\tau \epsilon^{3/2} \right)^{1/2}\epsilon^{3/4} +2c_\tau \epsilon^{3/2}\right\}^{1/3}\\
&\le c\left\{ \tau^3 2c_\tau+\tau^{-3} \left(2c_\tau\right)^{1/2} +2c_\tau\right\}^{1/3}\epsilon^{1/2}.
\end{align*}
We conclude by choosing $\epsilon>0$ so small that 
$$
\limsup_{k\rightarrow \infty}\tau^k\rho \overline{E}(\tau^k\rho) <\frac{1}{2}\epsilon_5.
$$
\end{proof}

\begin{rem}
A close inspection of the work in this paper shows that the methods employed establish an analogous version of Theorem \ref{mainthm} for weak solutions of the system 
\begin{equation}
\begin{cases}
\hspace{.06in}\partial_tu + (u\cdot \nabla)u  = \Delta u - \nabla p +\nabla\cdot FF^t\\
\partial_t F +(u\cdot \nabla)F  = \Delta F+\nabla u F + (\nabla Q)^t\\
\hspace{.73in}\nabla \cdot u = 0\\
\hspace{.65in}\nabla\cdot F^t=0
\end{cases},
\end{equation}
where $Q=Q(x,t)\in \R^3$.
\end{rem}

\appendix

\newpage


\end{document}